\newtheorem{theorem}{Theorem}[section]
\newtheorem{lemma}{Lemma}[section]
\newtheorem{proposition}{Proposition}[section]
\newtheorem{corollary}{Corollary}[section]
\newtheorem{remark}{Remark}[section]
\author{Weiyuan Qiu, Lingrui Wang}
\date{November 25, 2025}
\title{Bounded Fatou components of cosine functions}
\begin{document}

\maketitle

\begin{abstract}
    We constructed Yoccoz puzzle for cosine functions $f(z)=ae^z+be^{-z}$ with bounded post-critical set, and proved that a Fatou component is a Jordan domains if it is bounded and is not eventually a Siegal disk. We proved that $f$ is renormalizable if a critical value escapes to $\infty$. Finally, we obtained the local connectivity of $J(f)$.

    \textbf{Key words: } Cosine functions, Yoccoz Puzzle, Local connectivity  
\end{abstract}

\section{Introduction}\label{sec: Introduction}

In the dynamics of entire function $f$, the complex plane can be divided into two sets: the Fatou set $F(f)$ and the Julia set $J(f)$, according to whether the iteration sequence forms a normal family locally. By the classification of Fatou components, the dynamics on $F(f)$ is clear. However, even for the quadratic polynomials, the dynamics on the Julia sets is quite complicated. By Carath\'eodory's theorem, if the boundary of a Fatou component $U$ is locally connected, then the dynamics of $f$ on $\partial U$ can be conjugate to a mapping of much simpler form. Therefore, the local connectivity of the boundaries of Fatou components and the Julia set is an important topic in complex dynamics. 

The local connectivity of Julia sets has been studied for decades: Hyperbolic, subhyperbolic, semi-hyperbolic, geometrically finite and Collet-Eckmann polynomials always have locally connected Julia sets (\cite{DH84, CJY94, TY96, GS98}). These results have the analogy for transcendental entire functions (\cite{BM02, BFR15, Par22, ARS22}). Essentially, all these results assume that the mapping is expanding on a neighborhood of its Julia set. So a natural question is whether the Julia set (or the boundaries of Fatou components) is locally connected without the assumption on the expandness of $f$ on its Julia set. 

In early 1990s, Yoccoz (\cite{Hub93}) introduced the puzzle technique for quadratic polynomials. With this technique, he was able to study the local connectivity of quadratic polynomials without assuming the expandness on the Julia sets. Later, his results were extended to all polynomials by Roesch and Yin (\cite{RY08}). 

The puzzle technique is itself a powerful tool in the dynamics of polynomials. For example, it is involved in the study of the density of hyperbolicity in real polynomials (\cite{KSS07}) and the local connectivity of Julia sets of Newton mappings (\cite{Roe08}). The construction of puzzles in the polynomial cases is based on the fact that $\infty$ is a superattracting fixed point and the immediate basin of $\infty$ enjoys the structure of external rays and equipotentials. However, because of the absence of equipotentials, it becomes extremely difficult to construct Yoccoz puzzles for transcendental entire functions. 

In this paper, we considered a simple family of transcendental entire functions, the cosine family $\{f(z)=ae^z+be^{-z}: a,b \in \mathbb{C} \setminus \{0\}\}$. With the puzzles, we obtained the local connectivity of the boundaries of Fatou components:
\begin{theorem}\label{thm: bounded Fatou components are Jordan domains}
    Let $f(z)=ae^z+be^{-z}$ be a cosine function. Suppose that $P(f)$ is bounded, $U$ is a bounded Fatou component, which is not eventually mapped to a Siegel disk. Then $U$ is a Jordan domain. 
\end{theorem}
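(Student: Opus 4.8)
The plan is to adapt the argument of Roesch and Yin for polynomials \cite{RY08} to the cosine family, running it on the Yoccoz puzzle built in the previous sections. First, reductions and reformulation: a cosine function has only finitely many singular values, hence no wandering domain; since $U$ is bounded, every forward image $f^n(U)$ has compact closure, so a suitable forward image $f^{k}(U)=:U_0$ is periodic, bounded and simply connected, and — Siegel disks being excluded by hypothesis and Herman rings not occurring for entire functions — $U_0$ is a component of the immediate basin of an attracting or parabolic cycle; in particular it contains a critical point. Let $\phi\colon\mathbb D\to U_0$ be a Riemann map. By Carathéodory's theorem the statement amounts to showing that $\phi$ extends to a homeomorphism $\overline{\mathbb D}\to\overline{U_0}$, and then transporting the conclusion to $U$ along the finitely many iterates $f^{k}\colon U\to U_0$, each a proper holomorphic map of finite degree — here one must check that no critical value of $f$ lies on $\partial U_0$ in a way that reconnects the local preimages, so that the preimage components acquire at worst corners and stay Jordan. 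The homeomorphic extension for $U_0$ will in turn follow from two facts about the puzzle pieces $P_n(z)$ around points $z\in\partial U_0$: (i) $\bigcap_{n}\overline{P_n(z)}=\{z\}$ for every $z\in\partial U_0$; and (ii) the continuous boundary extension of $\phi$ is injective — equivalently $\partial U_0$ is locally connected and has no cut point.

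\emph{The core, part (i).} Fix $z\in\partial U_0$, put $K_z=\bigcap_n\overline{P_n(z)}$, and suppose $K_z$ is a non-degenerate continuum. Since $\partial U_0\subset J(f)$, the point $z$ does not escape, so the tableau of the nest $(P_n(z))$ is of one of the standard types. If the forward orbit of $P_n(z)$ meets a critical puzzle piece only finitely often, then for all large $n$ a suitable iterate of $f$ maps $P_{n+1}(z)$ conformally onto a puzzle piece; combining bounded distortion with the fact that a continuum separated from $\infty$ by nested annuli of infinite total modulus reduces to a point — here the annuli $P_n(z)\setminus\overline{P_{n+1}(z)}$ eventually contain conformal copies of one fixed annulus of positive modulus — forces $K_z$ to be a point, a contradiction. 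In the remaining, critically recurrent case, the relevant critical pieces surround the critical value not captured by the cycle of $U_0$; and since every critical point of a cosine map is simple ($f''=f$ is nonzero at a critical point), the associated renormalizations are quadratic-like of degree two.

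\emph{The recurrent case.} If $f$ is at most finitely renormalizable along this nest, Yoccoz's principal-nest argument — which in the non-renormalizable, bounded-degree situation needs no a priori bounds — again yields $\mathrm{diam}\,P_n(z)\to 0$. If $f$ is infinitely renormalizable along the nest, then $z$ lies in all the small Julia sets; I straighten the renormalizations to an infinitely renormalizable quadratic polynomial $Q$ whose corresponding Fatou component $\widetilde U$ is a bounded attracting or parabolic basin, apply the polynomial case of the theorem \cite{RY08} to get that $\partial\widetilde U$ is Jordan, hence locally connected at the image of $z$, and pull this back through the quasiconformal straightening to conclude $K_z=\{z\}$. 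The crucial structural point is that one only needs the puzzle pieces to shrink \emph{at points of $\partial U_0$}, where the interior of $U_0$ — uniformized by $\phi$, or by Fatou coordinates in the parabolic case — always provides one tame side; this is exactly what makes the polynomial results sufficient even in the infinitely renormalizable case, just as for polynomials themselves.

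\emph{Main obstacles.} I expect the recurrent/renormalizable case to be the crux: one must verify that the renormalization respects the puzzle pieces abutting $\partial U_0$, so that the tails of $(P_n(z))$ genuinely sit in the small Julia sets; control the geometry of the straightening near $\partial U_0$; and treat the parabolic sub-case with Fatou coordinates replacing a linearizer. A second delicate point is (ii), excluding cut points of $\partial U_0$: here one uses the invariance of the prime-end identifications under the dynamics of $f$ on $U_0$, together with the boundedness of $P(f)$, to rule out the configurations — in particular a critical point of $f$ sitting on $\partial U_0$ — that could produce such an identification.
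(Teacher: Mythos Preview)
Your reduction to a periodic component $B$ and the treatment of the non-recurrent tableau case match the paper. The gap is in the recurrent/renormalizable branch.

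You propose, once a renormalization appears, to straighten it to a quadratic polynomial $Q$ carrying a bounded attracting or parabolic basin $\widetilde U$ corresponding to $U_0$, and then to invoke \cite{RY08} on $\widetilde U$. This cannot work. The renormalization produced by the periodic critical tableau is $f^p\colon\hat P_{n_0+p}(c)\to\hat P_{n_0}(c)$ around a critical point $c$ with $f(c)=-v\in J(f)$, \emph{not} around the critical point $u\in U_0$ with $f(u)=v\in F(f)$. By the very construction of the puzzle, the pieces $\hat P_n(c)$ have internal rays of $B=U_0$ on their boundaries, so $U_0$ is never contained in the renormalization domain; after straightening there is no Fatou component of $Q$ that corresponds to $U_0$. (Indeed, in your own description $Q$ is infinitely renormalizable, hence $K(Q)=J(Q)$ and $Q$ has no bounded Fatou component whatsoever.) So \cite{RY08} has nothing to bite on, and your pullback step is vacuous.

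What the paper does instead realises exactly your ``tame side'' intuition, but without straightening. One shows directly that the small filled Julia set $K_c$ meets $\partial B$ in a single point, the $\beta$-fixed point $\beta_c$ of the quadratic-like map (Lemma~\ref{lm: intersection of the small filled-in Julia set and the boundary of B is a single point}). On the $B$-side, the internal-ray angles $\theta_k^{\pm}$ bounding $\hat P_{n_0+kp}(c)\cap B$ converge to a single $f^p$-fixed angle $\theta_0$, and any $f^p$-forward-invariant arc in the complement of $K_c$ can only land at $\beta_c$ (Lemma~\ref{lm: invariant ray lands at beta fixed point}); on the $J(f)$-side, the external addresses $\underline s_k^{\pm}$ of the flanking dynamic rays are eventually monotone and converge to periodic addresses $\underline s^{\pm}$ (Lemma~\ref{lm: monotoncity of external addresses}), whose rays also land at $\beta_c$ and together separate $K_c\setminus\{\beta_c\}$ from $\partial B$. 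Hence for any $z\in\partial B$ whose tableau has an infinite-depth critical column, some iterate $f^l(z)$ equals $\beta_c$, and $\mathrm{Imp}(z)\cap\partial B=\{z\}$ follows by pullback. No finitely/infinitely renormalizable dichotomy is needed, and the question of whether $K_c$ itself is locally connected is never raised.

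A minor remark on your part~(ii): once the puzzle is built with internal rays of $B$, each $\hat P_n(z)\cap B$ is a single sector between two internal rays, so $\mathrm{Imp}(z)\cap\partial B=\{z\}$ for all $z$ already forces distinct internal rays to land at distinct boundary points. The injectivity of the boundary extension of $\phi$ thus comes for free, and a separate cut-point argument is unnecessary.
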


It is worth noting that there are also some results about the local connectivity of the boundaries of Siegel disks. For example, for quadratic polynomials, if the rotation number of the Siegel disk is of bounded type, then the boundary of the disk is locally connected (\cite{Pet96}). And the local connectivity of Siegel disks holds for some families of transcendental entire functions (\cite{Zhang05,Yang13,Zhang16,ZFS20}).

The cosine function $f(z)=ae^{z}+be^{-z}$ has two critical values and no asymptotic values. So it shares some similarities with cubic polynomials. It is known that for a polynomial $p$ with at least one escaping critical orbit, or equivalently with $J(p)$ disconnected, $J(p)$ is a Cantor set if and only if the critical components of $K(p)$ is not periodic (\cite{BH92,QY09}). Here, $K(p)$, which is called the filled-in Julia set, is the complement of the immediate basin of $\infty$. In other words, if $p$ has periodic Fatou components and $J(p)$ is disconnected, then $p$ is renormalizable. We have the analogy for cosine functions:
\begin{theorem}\label{thm: cosine functions with an escaping critical value is renormalizable}
    Let $f$ be a cosine function. Suppose that the Fatou set is non-empty, and one of the critical values escapes to $\infty$. Then $f$ is renormalizable.
\end{theorem}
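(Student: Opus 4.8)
The plan is to follow the Branner--Hubbard strategy for cubic polynomials with a disconnected Julia set (\cite{BH92,QY09}), transported to the transcendental setting by replacing equipotentials and external rays with the logarithmic tract structure of $f$ near $\infty$ together with the dynamic rays of the escaping set. First I would record the reductions. A cosine function has two critical values and no asymptotic values, hence is of finite type, so it has no wandering domains and no Baker domains, and every periodic Fatou component is attracting, parabolic, or a Siegel disk. Since $F(f)\neq\emptyset$, such a cycle exists, and each of these three types is fed by a critical orbit that does not escape (a critical point in the immediate basin, resp.\ a critical orbit accumulating on the boundary of the Siegel cycle). Therefore not both critical orbits escape; together with the hypothesis this means exactly one critical value, say $v_1=f(c_1)$, escapes, while the forward orbit of the other critical point $c_2$ does not escape and the periodic Fatou cycle is fed by $c_2$.

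The central step --- and the one with no polynomial model to copy verbatim, since no equipotentials are available here --- is to build a Yoccoz puzzle adapted to the escaping orbit of $v_1$. As $v_1\in I(f)$ and $f$ has order $1$ with bounded singular set, $v_1$ lies on, or is the endpoint of, a dynamic ray $g$; hence $c_1$ lies on a ray whose $f$-image is $g$, and the full preimage $f^{-1}(g)$ has a self-crossing at $c_1$, the analogue of the figure-eight through a polynomial critical value. I would then fix $M$ large so that $\{|f|>M\}$ consists precisely of the two tracts $T_\pm$, arranged so that $T_\pm$ and the deep tail of the orbit of $v_1$ lie outside a bounded region carrying the non-escaping critical orbit; on $\partial T_\pm$ the argument of $f$ serves as an internal coordinate, exactly as for $e^z$. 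Combining the boundary arcs of the tracts with the finitely many relevant arcs of $f^{-1}(g),f^{-2}(g),\dots$, I would cut out a finite family of bounded Jordan domains, the depth-$0$ puzzle pieces, with $c_1$ and $c_2$ in different pieces and with the Markov property that $f$ maps each piece onto a union of depth-$0$ pieces, boundary to boundary. The depth-$n$ pieces, defined as the connected components of $f^{-n}$ of the depth-$0$ pieces, then give a nested sequence $(P_n)_{n\geq0}$ of pieces around $c_2$.

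Next I would show that $(P_n)$ does not shrink to $\{c_2\}$: the periodic Fatou cycle, or in the Siegel case the fact that the orbit of $c_2$ returns arbitrarily close to the Siegel boundary, produces a non-degenerate component of $\bigcap_n P_n$ through $c_2$. Then, by the combinatorics of the critical tableau exactly as in \cite{Hub93,BH92}, a critical nest with non-degenerate intersection must be periodic: there is $k\geq1$ with $f^{k}(P_{n+k})=P_n$ for all large $n$ and with $P_{n+k}$ again containing a critical point --- necessarily $c_2$, since the escaping point $c_1$ has left every sufficiently deep piece --- so that $f^{k}\colon P_{n+k}\to P_n$ is a proper branched covering of degree $2$. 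Finally I would fix $n$ large enough that $P_{n+k}\Subset P_n$, the compact containment following from a modulus estimate on the annuli $P_n\setminus P_{n+k}$ as in standard puzzle theory, so that $f^{k}\colon P_{n+k}\to P_n$ is a quadratic-like map with non-degenerate connected filled Julia set containing $c_2$; hence $f$ is renormalizable.

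The main obstacle is the construction in the second step: producing a genuinely locally finite partition of a neighbourhood of the non-escaping part into bounded Jordan puzzle pieces with the Markov property, extracted from the tract geometry and the pulled-back dynamic rays, while checking that every piece is bounded, that the escaping critical point really does leave every deep piece, and --- this is the delicate point when the Fatou cycle is a Siegel disk --- that the critical nest has non-degenerate intersection. In the parabolic and Siegel cases one must also argue carefully in the tableau step that the returning piece $P_{n+k}$ genuinely captures a critical point rather than returning by a degree-$1$ map; this is exactly where the hypothesis $F(f)\neq\emptyset$, which forces the non-escaping critical orbit to interact with the periodic cycle, enters.
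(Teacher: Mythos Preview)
Your strategy is considerably more elaborate than the paper's, and it misses the key simplification available for cosine maps. The paper does not build a Yoccoz puzzle or run a tableau argument at all; instead it constructs the quadratic-like restriction in one step, at period~$1$, directly from the $2\pi i$-periodicity of $f$ and the dynamic ray through the escaping critical value. Concretely: the ray $g_{\underline{s}}$ through $-v$ pulls back to a pair of rays crashing at each critical point $u_{2k+1}$, and these (being $2\pi i$-translates of one another and eventually horizontal by Lemma~\ref{lm: properties of dynamic rays}) partition $\mathbb{C}$ into strips $S_k$, each containing a single critical point $u_{2k+2}$ over $v$. One then sets $R_M=\{z\in S_{k_0}:|\mathrm{Re}(z-u_{2k_0+2})|<M\}$ and uses that $f$ sends the vertical strip $\{|\mathrm{Re}(z-u)|<M\}$ onto an ellipse $E_M$ with foci $\pm v$; for $M$ large, $R_M\Subset E_M$, and $f:R_M\to E_M\setminus g_{\underline{s}}$ is proper of degree $2$. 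If the finitely many initial iterates of $-v$ happen to lie in $R_M$, one removes the corresponding forward arcs of $g_{\underline{s}}$ to restore properness. That is the entire argument.

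Your outline could in principle be pushed through, but it is a substantial detour, and several of the steps you yourself flag as delicate---a locally finite bounded puzzle built from tract boundaries, non-degeneracy of the critical nest in the Siegel case, compact containment via a modulus estimate---are genuinely non-trivial in the transcendental setting; indeed the paper's own puzzle construction in \S\ref{sec: construction of puzzles} requires $P(f)$ bounded, which fails here. More to the point, your tableau argument would only yield a renormalization of some period $k\ge 1$, whereas the direct construction shows that $f$ itself is quadratic-like on a suitable domain. You already identified the crucial ingredient (the self-crossing of $f^{-1}(g)$ at $c_1$), but you should use those pulled-back rays, together with the $2\pi i$-symmetry, to cut out a fundamental strip immediately, rather than feeding them into the general Branner--Hubbard machinery.
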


In this case, the local connectivity of Fatou components can be deduced easily.
\begin{corollary}\label{cor: Fatou components are locally connected in renormalizable case}
    Let $f$ be a cosine function. Suppose that the Fatou set is non-empty, and one of the critical values escapes to $\infty$. If $f$ has no Siegel disks, then all Fatou components of $f$ are Jordan domains.
\end{corollary}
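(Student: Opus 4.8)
The plan is to read the corollary off Theorem~\ref{thm: cosine functions with an escaping critical value is renormalizable} together with the machinery behind Theorem~\ref{thm: bounded Fatou components are Jordan domains}. One should note at the outset that Theorem~\ref{thm: bounded Fatou components are Jordan domains} cannot be quoted verbatim: as soon as a critical value escapes, $P(f)$ is unbounded. The role of renormalizability is to provide a substitute for the bounded post-critical set --- a polynomial-like restriction with connected Julia set, equivalently a hair on which a critical value escapes that can be adjoined to the puzzle graph --- around which a Yoccoz puzzle is still available. So the proof splits into two parts: (i) every Fatou component of $f$ is bounded; (ii) with the renormalization (or the hair) supplying the frame, the argument of Theorem~\ref{thm: bounded Fatou components are Jordan domains} applies to each such component and shows it is a Jordan domain. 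Since $f$ has no Siegel disks, no component is eventually a Siegel disk, so there is no obstruction from that side.

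For (i), first pin down $F(f)$. The map $f$ has only the two critical values $\pm2\sqrt{ab}$ as singular values and no asymptotic values; hence $f$ has no wandering domains (Goldberg--Keen, Eremenko--Lyubich) and its escaping set lies in its Julia set (Eremenko--Lyubich), so $f$ has no Baker domains. With the hypothesis of no Siegel disks, every Fatou component is eventually iterated onto a cycle of immediate attracting or parabolic basins, and such a cycle captures a singular value. One critical value, say $v_1$, escapes, so $v_1\in I(f)\subseteq J(f)$; the captured singular value can then only be the other critical value $v_2$. Thus there is a single cycle of periodic Fatou components, $F(f)$ is its grand orbit, and by Theorem~\ref{thm: cosine functions with an escaping critical value is renormalizable} this cycle is caught by a polynomial-like restriction $f^{n}\colon V\to V'$ of connected Julia set $K$; shrinking $V'$ about $K$ we may take $V'$ to be a bounded Jordan domain with $v_1\notin\overline{V'}$, and the periodic components then lie in a bounded region.

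Boundedness of all components is an exponential-pullback argument. Writing $f(z)=h(e^{z})$ with $h(u)=au+bu^{-1}$, a degree-two proper map of $\mathbb C^{*}$ onto $\mathbb C$ whose critical values are $\pm2\sqrt{ab}$, one has $f^{-1}(W)=\exp^{-1}\!\big(h^{-1}(W)\big)$ for any set $W$. If $W\subseteq\mathbb C$ is a bounded topological disk containing at most one critical value of $h$, then $h^{-1}(W)$ is a topological disk (a double cover of $W$ branched over at most one point), bounded away from $0$ and $\infty$, so every component of $f^{-1}(W)$ is a bounded Jordan domain. This applies to $W=V'$ since $v_1\notin\overline{V'}$, and it applies at every later pullback as well, because a component of $f^{-j}(V')$ meeting $F(f)$ cannot contain $v_1$: its forward orbit eventually lands in the fixed bounded set $V'$, which the escaping orbit of $v_1$ visits only finitely often. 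Hence every Fatou component sits inside a bounded component of some $f^{-j}(V')$ and is bounded. For (ii) one then runs the proof of Theorem~\ref{thm: bounded Fatou components are Jordan domains}: the renormalization (or the hair of $v_1$) replaces the bounded post-critical set in framing the puzzle pieces around a Fatou component $U$, these pieces are bounded by a similar winding-free argument, and the puzzle argument shows $\partial U$ is locally connected, so $U$ is a Jordan domain. (Alternatively, for the components inside the renormalization one may straighten $f^{n}|_V$ to a polynomial $P$ --- which again has no Siegel disk --- quote the local connectivity of $J(P)$ (Yoccoz, and Roesch--Yin in higher degree), and transfer back by the hybrid conjugacy.)

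I expect the main obstacle to be the winding control in (i): one must rule out, at every level of the pullback tree, a preimage piece that supports a Fatou point and contains \emph{both} critical values of $h$, since that is exactly when $\exp^{-1}$ wraps around the origin and can produce an unbounded Fatou component. The escaping hypothesis is what removes this --- it takes $v_1$ out of the renormalization's range, and since the orbit of $v_1$ tends to $\infty$ it misses all the finite-depth pullbacks. A secondary technical point is that the puzzle machinery must cover a Fatou component $U$ even when $U$ protrudes from the renormalization's domain; here one uses that $F(f)$ is the single grand orbit of the captured cycle, so $U$ is an $f$-preimage of a component that sits inside $V$, and the puzzle around $U$ is taken to be the corresponding pullback of the puzzle around that component.
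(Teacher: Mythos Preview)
Your plan is viable in outline, but it is considerably more elaborate than the paper's argument, and part (i) has a gap as written.

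For boundedness the paper simply invokes Lemma~\ref{lm: dichotomy of the positions of critical points}: one critical value escapes and the other is captured by an attracting or parabolic cycle (hence has bounded orbit), so the hypotheses of that lemma are met; since the two critical values cannot lie in the same Fatou component (one is in $J(f)$), case~2 applies and every Fatou component is bounded. You do not use this lemma, and your replacement argument---pulling back $V'$ through the factorization $f=h\circ\exp$ and controlling winding---is both harder and, as stated, incomplete. Your justification that ``a component of $f^{-j}(V')$ meeting $F(f)$ cannot contain $v_1$'' is that ``the escaping orbit of $v_1$ visits $V'$ only finitely often''; this yields $v_1\notin W_j$ for \emph{large} $j$, not for all $j$, and the first few pullbacks are exactly where the winding obstruction could occur. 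The fix is to arrange that the \emph{entire} forward orbit of $v_1$ misses $\overline{V'}$---possible since that orbit is disjoint from the small filled Julia set $K$ and meets any bounded neighbourhood of $K$ in only finitely many points. The construction in Theorem~\ref{thm: cosine functions with an escaping critical value is renormalizable} already does this (the slits $f^j(g_{\underline{s}})$ remove the relevant orbit segment), and with that choice your inductive pullback goes through. But all of this is bypassed by Lemma~\ref{lm: dichotomy of the positions of critical points}.

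For (ii) the paper does precisely what you relegate to a parenthetical alternative: straighten the quadratic-like map furnished by Theorem~\ref{thm: cosine functions with an escaping critical value is renormalizable} to a quadratic polynomial with no Siegel disk, quote \cite{RY08} to get that its bounded Fatou components are Jordan domains, and transport back by the hybrid conjugacy. There is no need to rebuild a Yoccoz puzzle for $f$ with the hair replacing a bounded $P(f)$, nor to worry about components protruding from the renormalization domain: every periodic Fatou component of $f$ already sits inside the small filled Julia set, and the preperiodic components are then handled by the same branched-cover pullback used at the end of the proof of Theorem~\ref{thm: bounded Fatou components are Jordan domains}. So your primary route for (ii)---reworking Section~\ref{sec: construction of puzzles} in the unbounded-$P(f)$ setting---is unnecessary here, and the paper avoids it entirely.
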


Finally, with Theorem \ref{thm: bounded Fatou components are Jordan domains} and Corollary \ref{cor: Fatou components are locally connected in renormalizable case}, we conclude the local connectivity of $J(f)$:
\begin{theorem}\label{thm: local connectivity of Julia set}
    Let $f$ be a cosine function. Suppose that $F(f)$ is non-empty and has no Siegel disks. Suppose that a critical value $v$ lies in $F(f)$. Suppose that $f$ satisfies one of the following conditions:
    \begin{enumerate}
        \item the critical value $-v \in F(f)$ and is not in the same Fatou component as $v$,
        \item the critical value $-v$ escapes to $\infty$, or
        \item $-v$ has bounded orbit and there exists an integer $N \ge 1$, such that for every periodic Fatou component $U$, the closure of every strictly preperiodic component of $f^{-N}(U)$ is disjoint from the $\omega$-limit set of $-v$.
    \end{enumerate}
    Then $J(f) \cup \{\infty\}$ is locally connected. 
\end{theorem}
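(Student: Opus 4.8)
The plan is to reduce Theorem~\ref{thm: local connectivity of Julia set} to two facts, each of which is either already established above or is an application of the puzzle machinery built for $f$: that every Fatou component of $f$ is a Jordan domain, and that the Yoccoz puzzle pieces for $f$ shrink, in the sense that the nest of pieces around any point of $J(f)$ has spherical diameter tending to $0$ unless the point is preperiodic to a Fatou component (whose boundary is then a Jordan curve). Granting both, I would invoke the classical criterion that a compact subset $X$ of $\hat{\mathbb{C}}$ is locally connected once every point of $X$ admits arbitrarily small connected relative neighborhoods, applied to $X = J(f)\cup\{\infty\} = \hat{\mathbb{C}}\setminus F(f)$; the work is then to produce such neighborhoods at the finite points of $J(f)$ and, separately, at $\infty$.

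First I would verify, under each of the hypotheses (1)--(3), that every Fatou component is a Jordan domain. In cases (1) and (3) the hypotheses force $P(f)$ to be bounded: $f$ has no wandering domains (being of finite type) and, by assumption, no Siegel disks, while the requirement that $v$ lie in $F(f)$, that $-v$ lie in $F(f)$ in case (1), and that $-v$ have bounded orbit in case (3), precludes Baker domains and escaping postsingular orbits, so every Fatou component is preperiodic to an attracting or parabolic basin and $P(f)\subset\mathbb{C}$ is bounded. Theorem~\ref{thm: bounded Fatou components are Jordan domains} then gives that every bounded Fatou component is a Jordan domain, and one checks that no unbounded Fatou component occurs. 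In case (2) the escaping critical value places us under the hypotheses of Theorem~\ref{thm: cosine functions with an escaping critical value is renormalizable}, so Corollary~\ref{cor: Fatou components are locally connected in renormalizable case} already gives that all Fatou components are Jordan domains.

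Next I would prove the shrinking of puzzle pieces and assemble local connectivity at the finite points. For $z\in J(f)$ whose orbit never meets the closure of a Fatou component, consider the nest $P_0(z)\supset P_1(z)\supset\cdots$ of puzzle pieces about $z$ and show its spherical diameter tends to $0$. In case (2) this is done by passing to the renormalization, which straightens (Douady--Hubbard) to a polynomial that is hyperbolic, parabolic, or has an escaping critical value, hence has locally connected Julia set by classical results; the shrinking on the small Julia set is then transported to its preimages under $f$. In cases (1) and (3) one runs the non-renormalizable analysis: control the first-return maps to critical puzzle pieces by moduli of annuli, as in Yoccoz's argument and the Kahn--Lyubich covering lemma, the separation hypothesis in (3) being exactly what prevents the orbit of $-v$ from degenerating these moduli, while (1) supplies the analogous combinatorial independence of the two critical orbits. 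Since each $\overline{P_n(z)}$ meets $J(f)$ in a connected set (built into the puzzle) and the shrinking also forces the spherical diameters of the Fatou components to tend to $0$, these are small connected neighborhoods of $z$ in $J(f)$. For $z$ eventually landing in $\overline{U}$ with $U$ a Fatou component, I would glue the Jordan-curve structure of $\partial U$ to the shrinking of the puzzle pieces meeting $\partial U$ near $z$ from outside. This makes $J(f)$ locally connected at every finite point.

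Finally I would treat the point $\infty$, which has no polynomial counterpart and is where I expect the main difficulty. The sets $Y_R := (J(f)\cup\{\infty\})\setminus\overline{D(0,R)}$ form a neighborhood basis of $\infty$ shrinking to $\{\infty\}$, so it suffices to show $Y_R$ is connected for all large $R$. This needs the behavior of $f$ near its essential singularity: on the half-planes $\{\operatorname{Re} z > c\}$ and $\{\operatorname{Re} z < -c\}$, $f$ is a bounded perturbation of $z\mapsto ae^{z}$, resp.\ $z\mapsto be^{-z}$, so the part of $J(f)$ there is organized into curves escaping to $\infty$, each accumulating at $\infty$, while the vertical strip $\{|\operatorname{Re} z|\le c\}$ maps into a bounded set and its part of $J(f)$ is joined to $\infty$ along the strip; since deep preimages of the puzzle graph cluster at $\infty$, the puzzle pieces meeting a neighborhood of $\infty$ become thin there, and one must show this forces $Y_R$ to be connected, with no bounded arc of $J(f)$ trapped outside $\overline{D(0,R)}$ in a component of its own. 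Pinning down this global topology of $J(f)$ near $\infty$ --- in effect, reading off the tract and escaping-set structure of $f$ that the puzzle does not directly encode --- is, I expect, the crux of the proof.
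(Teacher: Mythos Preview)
Your plan diverges from the paper's in a way that introduces a genuine, avoidable difficulty. The paper's proof does \emph{not} attempt to build small connected neighborhoods at every point of $J(f)\cup\{\infty\}$; instead it invokes Whyburn's criterion (Lemma~\ref{lm: charaterization of local connectivity}): a compact connected set $E\subset\hat{\mathbb{C}}$ is locally connected iff (i) each complementary component has locally connected boundary and (ii) only finitely many complementary components have spherical diameter exceeding any given $\epsilon$. Condition (i) is exactly ``every Fatou component is a Jordan domain'', which you also verify. Condition (ii) is the statement that the spherical diameters of the Fatou components tend to $0$, and the paper checks this by showing that $f$ is semihyperbolic on the boundary of each strictly preperiodic component $V$ with $f(V)$ periodic (here $\partial V\cap P(f)=\varnothing$, since $\partial V$ avoids the attracting/parabolic cycle and, under hypothesis (2) or (3), the orbit of $-v$), then quoting the diameter-shrinking argument from \cite{BM02} together with Lemma~\ref{lm: mapped to periodic components in finitely many steps}. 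In case (1) the paper does even less: both critical values in attracting/parabolic basins means $f$ is hyperbolic or geometrically finite, and local connectivity is already in the literature \cite{ARS22,BFR15}.

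The upshot is that the step you flag as ``the crux'' --- producing connected relative neighborhoods of $\infty$ by analyzing the escaping-set structure --- is never needed. Whyburn's criterion handles $\infty$ for free once (i) and (ii) hold. Your proposed route also has a second gap: you want puzzle pieces $P_n(z)$ to shrink for \emph{every} $z\in J(f)$, but the modified (bounded, thickened) puzzle pieces of Section~\ref{sec: construction of puzzles} are only defined at points of $\partial f^j(B)$ or of $P(f)$, not at generic Julia points (let alone in case (2), where $P(f)$ is unbounded and that puzzle is not constructed at all). Extending the shrinking argument to the full Julia set would be substantial additional work that the paper sidesteps entirely.
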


This paper is structured as following:
Section \ref{sec: basic settings} stated some basic settings and proved a dichotomy of the boundedness of Fatou components; Section \ref{sec: construction of puzzles} constructed the Yoccoz puzzle for cosine functions with bounded post-critical set; Section \ref{sec: local connectivity of bounded Fatou components} proved Theorem \ref{thm: bounded Fatou components are Jordan domains} using the puzzle; Section \ref{sec: renormalization of cosine functions} proved Theorem \ref{thm: cosine functions with an escaping critical value is renormalizable} and Corollary \ref{cor: Fatou components are locally connected in renormalizable case}; and Section \ref{sec: local connectivity of Julia sets} proved Theorem \ref{thm: local connectivity of Julia set}.

\section{Basic settings}\label{sec: basic settings}
This section gives some basic assumptions on the cosine function $f(z)=ae^{z}+be^{-z}$. $f$ can be written into the following form:
\begin{equation*}
    f(z)=\frac{v}{2}(e^{z-u}+e^{-(z-u)}),
\end{equation*}
with critical points $u+k\pi i$, $k \in \mathbb{Z}$ and critical values $\pm v$, such that $f(u+2k\pi i)=v$, $f(u+(2k+1)\pi i)=-v$, $k \in \mathbb{Z}$. Thus $f$ is a function with only two critical values and no asymptotic values. By the results of Eremenko-Lyubich (\cite{EL92}, Theorem 1 \& Theorem 3), $F(f)$ has no wandering domains and the only possibilities of periodic Fatou components are: basins of attracting periodic points, basins of parabolic periodic points, or Siegel disks. 

The positions of critical points and critical values satisfy the following dichotomy:
\begin{lemma}\label{lm: dichotomy of the positions of critical points}
    Let $f$ be a cosine function. Suppose that $F(f)$ is non-empty, $f$ has no Siegel disks, and the critical orbits either escape to $\infty$ or remain bounded. Then one of the following holds.
    \begin{enumerate}
        \item The critical points of $f$ lie in the unique Fatou component, and all Fatou components are unbounded;
        \item Every component of $F(f)$ contains at most one critical point of $f$, and every component of $F(f)$ is bounded. 
    \end{enumerate}
\end{lemma}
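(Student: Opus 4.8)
The plan is to lean on two structural features of $f(z)=\frac v2\big(e^{z-u}+e^{-(z-u)}\big)$. First, the Eremenko--Lyubich theory: $f$ lies in the Speiser class (only the two singular values $\pm v$), so $F(f)$ has no wandering domains, every periodic component is an attracting or parabolic basin (no Siegel disks by hypothesis), and, since $f\in\mathcal B$ gives $I(f)\subseteq J(f)$, there are no Baker domains; also $f$ is entire, so there are no Herman rings, and hence every Fatou component is simply connected. Second, the factorisation $f=J\circ\exp(\cdot-u)$ where $J(\zeta)=\frac v2(\zeta+\zeta^{-1})$ is the Joukowski map and $\exp(\cdot-u)\colon\mathbb C\to\mathbb C^{\ast}$, $z\mapsto e^{z-u}$, is the universal covering. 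Setting $g:=\exp(\cdot-u)\circ J\colon\mathbb C^{\ast}\to\mathbb C^{\ast}$ one gets $\exp(\cdot-u)\circ f=g\circ\exp(\cdot-u)$, so $f$ is the lift of the transcendental self-map $g$ of $\mathbb C^{\ast}$, whose singular values are $e^{\pm v-u}$ and whose critical points are $\zeta=\pm1$, i.e. the points $u,\ u+\pi i$ upstairs; consequently $F(f)=\exp(\cdot-u)^{-1}(F(g))$ and $J(f)=\exp(\cdot-u)^{-1}(J(g))$.

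The count of critical points is handled by the two symmetries $f(z+2\pi i)=f(z)$ and $f\circ\rho=f$ for the involution $\rho(z)=2u-z$ (the latter giving $f^{n}\circ\rho=f^{n}$ for $n\ge1$, so $\rho$ permutes the components of $F(f)$). Suppose a component $U$ of $F(f)$ contains two distinct critical points; since all critical points are $u+m\pi i$, after translating $U$ by an element of $2\pi i\mathbb Z$ I may assume $U\ni u$ and $U\ni u+m\pi i$ with $m\neq0$. Then $\rho(U)$ is a component containing $\rho(u)=u\in U$, so $\rho(U)=U$, whence $U\ni\rho(u+m\pi i)=u-m\pi i$; but then $U$ and its translate $U-2m\pi i$ share the point $u-m\pi i$, so $U=U-2m\pi i$ is invariant under a nonzero translation and is therefore unbounded. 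Contrapositively, if every component of $F(f)$ is bounded then every component carries at most one critical point, which is alternative (2) (the boundedness assertion there being exactly the case hypothesis, and $F(f)\neq\emptyset$ being given). So the whole statement reduces to: either all components are bounded, or some component is unbounded, and in the latter case I must produce alternative (1).

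For the unbounded case the decisive input is the fact that \emph{every Fatou component of $f$ has bounded real parts}; granting this, a component $U$ sits in a strip $\{|\operatorname{Re}(z-u)|<R\}$, so $\widehat U:=\exp(U-u)$ lies in a round annulus $\{e^{-R}<|\zeta|<e^{R}\}$, bounded away from the essential singularities $0,\infty$ of $g$, and the same holds for the component $\widetilde U$ of $F(g)$ containing it. A short covering-space argument then gives: $U$ is unbounded $\iff U=U+2\pi i\iff\widetilde U$ separates $0$ from $\infty$ in $\mathbb C^{\ast}$ (if it does not, $\exp(\cdot-u)^{-1}(\widetilde U)$ is a disjoint union of bounded translates; if it does, the preimage is connected and $2\pi i$-periodic). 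So if $U$ is unbounded, $\widetilde U$ is an annulus in $\mathbb C^{\ast}$ separating $0$ from $\infty$; its two complementary "disks" $K_{0}\ni 0$, $K_{\infty}\ni\infty$ are disjoint from $\widehat U$, and I would argue that they miss the unit circle $\{|\zeta|=1\}$: the minimal vertical line $\{\operatorname{Re}z=\operatorname{Re}u\}$ maps onto the segment $[-v,v]=J(\{|\zeta|=1\})$, and $U$, being $2\pi i$-periodic and simply connected, descends to an annulus whose core is a loop around $0$, forcing $\{|\zeta|=1\}\subseteq\widehat U$; hence $\pm1\in\widehat U$, i.e.\ $u,u+\pi i\in U$, and since $U=U+2\pi i$, $U$ contains \emph{all} critical points $u+m\pi i$. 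To see $F(f)=U$: any other component would carry no critical point, so $f$ would act homeomorphically along its forward orbit until it hit $U$; pulling this into the cylinder, where $g$ has only the two singular values $e^{\pm v-u}$, both captured by $\widetilde U$, and using that no \emph{periodic} component of $g$ can separate $0$ from $\infty$ (being then a finite-modulus annulus bounded away from $0,\infty$, an iterate of $g$ would restrict to a degree-one self-covering, i.e.\ a conformal automorphism, incompatible with an attracting or parabolic basin), one gets that $\widetilde U$ is invariant and completely invariant, so $F(g)=\widetilde U$ and $F(f)=U$: alternative (1).

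The main obstacle I anticipate is the boundedness of the real parts of Fatou components: this should come from the two-sided exponential growth $|f(z)|\ge\big||a|e^{\operatorname{Re}z}-|b|e^{-\operatorname{Re}z}\big|\to\infty$ as $|\operatorname{Re}z|\to\infty$, which already traps $f^{-1}$ of any bounded set in a fixed vertical strip; turning this, together with $I(f)\subseteq J(f)$ and the logarithmic-tract structure at $\operatorname{Re}z=\pm\infty$, into the exclusion of Fatou components of unbounded real part is the part that needs care. The second delicate point is the connectedness in alternative (1), for which the self-map-of-$\mathbb C^{\ast}$ picture and the two-singular-value budget seem to be the right tools, and the auxiliary claim that an unbounded component actually engulfs the segment $[-v,v]$ (hence the critical points) should also be proved along the way.
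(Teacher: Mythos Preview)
Your approach through the semiconjugacy to the self-map $g$ of $\mathbb{C}^{\ast}$ is genuinely different from the paper's. The paper works directly with an invariant basin $B$ and splits on whether $B$ contains both critical values (then the preimage of an arc joining $\pm v$ inside $B$ links all the critical points, giving unboundedness and, via the singular-value budget, uniqueness of the cycle) or only one; in the latter case it uses \emph{landing dynamic rays} on $\partial B$ together with their asymptotics (Lemma~\ref{lm: properties of dynamic rays}) to trap $B$ first in a horizontal strip and then in a bounded rectangle by the two-sided exponential expansion. Your covering picture would, if it worked, avoid dynamic rays entirely.

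But the outline has a real gap at what you call step (b). From ``$\widetilde U$ is an annulus separating $0$ from $\infty$'' you jump to ``$\{|\zeta|=1\}\subseteq\widehat U$'', and this is a non sequitur: the annulus $\{2<|\zeta|<3\}$ separates $0$ from $\infty$ without touching the unit circle, and ``the core is a loop around $0$'' says nothing about \emph{where} that loop sits. What you actually need is $\pm1\in\widetilde U$, and this can be salvaged only after first reducing to an \emph{invariant} component $U$ (which you never do): then $U$ must contain a critical point, so $\widetilde U$ contains $1$ or $-1$; since the involution $\zeta\mapsto1/\zeta$ (your $\rho$, read downstairs) preserves $F(g)$ and fixes $\pm1$, one gets $\widetilde U=1/\widetilde U$, and an inversion-invariant annulus separating $0$ from $\infty$ is then forced to contain both fixed points $\pm1$. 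Your acknowledged gap (a) is equally substantial and is exactly where the paper's dynamic-ray argument (Claims~1 and~2 of Case~2) does its work; the sketch via $I(f)\subseteq J(f)$ and ``logarithmic-tract structure'' does not close it. Finally, the parenthetical ``no periodic component of $g$ can separate $0$ from $\infty$'' literally applies to $\widetilde U$ itself and so contradicts what you have just established; even read charitably as ``no \emph{other} periodic component'', the passage from ``$\widetilde V$ does not separate $0$ from $\infty$'' to ``$\widetilde V$ does not exist'' is not supplied.
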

\begin{proof}
    We will discuss the positions of critical values of $f$. Let $B$ be a Fatou component of $f$. Since $F(f)$ has no wandering components, we assume that $B$ is periodic. Moreover, we consider an invariant Fatou component $B$, i.e. $f(B)=B$. 

    \textbf{Case 1.} $B$ contains both critical values. Take a curve $\gamma$ in $B$ connecting the two critical values. Then $f^{-1}(\gamma)$ is a curve in $B$, connecting all critical points of $f$. Thus $B$ is unbounded. Since every cycle of Fatou components is associated with at least one critical orbit, $f$ has no other cycles of Fatou components. This implies that every Fatou component $B'$ is eventually mapped to an unbounded Fatou component $B$. $B'$ is unbounded since $f$ is an entire function.

    \textbf{Case 2.} $B$ contains only one critical value, say $v$. In this case, $B$ contains only one critical point, say $u$, and $u+2k\pi i \in B+2k\pi i$, $k \in \mathbb{Z}$, which are distinct Fatou components. 

    Suppose by contradiction that $B$ is unbounded. Let $\delta:(0,+\infty) \to \mathbb{C}$ be a curve in $B$ with $\delta(t) \to \infty$ as $t \to +\infty$. Since the orbits of critical values $\pm v$ either are bounded or tend to $\infty$, we can always find a dynamic ray (see \ref{sec: construction of puzzles} for the definition) $\gamma$, which converges at some point on $\partial B$. Then $\pm (\gamma \pm 2\pi i)$ converges to a point on $\partial B + 2\pi i$ or $\partial B-2\pi i$. 

    \textbf{Claim 1. }$|\mathrm{Re} \; \delta| \to +\infty$. Otherwise, $\delta$ is contained in a vertical strip $\{z \in \mathbb{C}: |\mathrm{Re} \; z| \le M\}$ for some $M>0$. We may assume that $\delta \subset \{z \in \mathbb{C}: \mathrm{Re}(z-u)<0\}$, and $\mathrm{Im} \; \delta(t) \to +\infty$, as $t \to +\infty$. Thus at least one of $(\gamma \pm 2\pi i)$ is contained in the strip $\{z \in \mathbb{C}: |\mathrm{Re} \; z| \le M\}$. This contradicts Lemma \ref{lm: properties of dynamic rays}. 

    \textbf{Claim 2. }$\mathrm{Im} \; \delta(t)$ is bounded. Again we can choose a dynamic ray $\gamma$ in $\{z \in \mathbb{C}: \mathrm{Re}(z-u)<0\}$, which converges to some point on $\partial B$. Then $B \cap \{z \in \mathbb{C}: \mathrm{Re}(z-u)<0\}$ is contained in the domain bounded by $\gamma+2\pi i$, $\gamma-2\pi i$, and the vertical line $\{z \in \mathbb{C}: \mathrm{Re}(z-u)=0\}$. By Lemma \ref{lm: properties of dynamic rays}, $\gamma$ is eventually horizontal. Thus $B \cap \{z \in \mathbb{C}: \mathrm{Re}(z-u)<0\} \subset \{x+iy: x<0, (k_0-1)2\pi \le y \le (k_0+1)2\pi\}$ for some $k_0$. By the symmetry of $B$, there exists $M_0>0$, such that $B \subset S=:\{z \in \mathbb{C}: -M_0 < \mathrm{Im} \; z< M_0\}$. In particular, $\mathrm{Im} \; \delta(t)$ is bounded. 

    Finally, we prove that $B$ is bounded. Since $B$ is $f$-invariant, $B \subset S \cap f^{-1}(S)$. For every $\epsilon>0$, there exists $M>0$, such that for $z \in S \cap f^{-1}(S)$ with $|\mathrm{Re} \; z|>M$, we have $|\mathrm{Im}(z-u) + \mathrm{arg} \; v|<\epsilon$. Choose $M' >M$, such that $e^{M'}>6M'$. For every $z \in S \cap f^{-1}(S)$, if $|\mathrm{Re} \; z| \ge M'$, since $|\mathrm{Im}(z-u)+\mathrm{arg} \; v|<\epsilon$, we have 
    \begin{equation*}
        |f(z)| \ge \frac{e^{|\mathrm{Re} \; z|}}{2}.
    \end{equation*}
    Take $M'$ sufficiently large, such that $\sqrt{e^{2M'}/4-M^2_0} > e^{M'}/3$. For $z \in S \cap f^{-1}(S)$, we have $|\mathrm{Re} \; f(z)| \ge e^{|\mathrm{Re} \; z|}/2 \ge e^{2M'}/4$. Thus 
    $$|\mathrm{Re} \; f(z)| \ge \sqrt{e^{2M'}/4-M^2_0} \ge e^{M'}/3 >M'.$$
    In particular, for $z \in B$, $|\mathrm{Re} \; z| > M'$, we have $|\mathrm{Re} \; f(z)| \ge M'$, and $f(z) \in B \subset S \cap f^{-1}(S)$. Inductively, $|\mathrm{Re} \; f^n(z)| >M'$, contradicting that $\{f^n(z)\}$ tends to an attracting or parabolic fixed point. Therefore, $B$ is bounded.
\end{proof}

If the critical values lie in the same Fatou component, then the Fatou components of $f$ are unbounded by Lemma \ref{lm: dichotomy of the positions of critical points}. By a result of Baker and Weinreich (\cite{BW91}), the boundary of any Fatou component is not locally connected at any point:
\begin{lemma}[\cite{BW91}]\label{lm: not locally connected anywhere}
    Let $f$ be a transcendental entire function and $U$ be an unbounded Fatou component of $f$. If $f^n$ does not tend to $\infty$ on $U$, then $\partial U$ is not locally connected at any point.
\end{lemma}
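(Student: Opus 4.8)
The statement is the theorem of Baker and Weinreich \cite{BW91}; the plan is to reprove it along the following lines.

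\emph{Reductions.} By Baker's theorem every multiply connected Fatou component of a transcendental entire function is a wandering domain on which the iterates tend to $\infty$, so the hypothesis forces $U$ to be simply connected. Also $U$ cannot contain a punctured neighbourhood of $\infty$: on such a neighbourhood $\{f^{n}\}$ would be normal, and since $\infty$ is an essential singularity of $f$ the only possible local uniform limit is $\infty$ --- forcing $f^{n}\to\infty$ on $U$. Hence, viewed in $\hat{\mathbb C}$, the point $\infty$ lies on $\partial U$, and $\partial U$ is an unbounded connected subset of $\mathbb C$. Finally, local connectivity of the boundary at a point is preserved by $f$ (near a non-critical point $f$ is a local homeomorphism carrying $\partial U$ onto the boundary of its image), and in the setting in which the lemma is used $U$ is pre-periodic with unbounded periodic image; so, replacing $U$ by a forward image and $f$ by an iterate, I may assume $f(U)=U$. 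Then, by the classification of periodic Fatou components and since $f^{n}\not\to\infty$ excludes a Baker domain, $U$ is an attracting basin, a parabolic basin, or a Siegel disk; in particular no orbit in $U$ accumulates at $\infty$.

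\emph{The core argument.} Fix a Riemann map $\psi\colon\mathbb D\to U$. Being univalent it omits two values, hence is a normal function with radial limits $\psi^{*}(\zeta)\in\hat{\mathbb C}$ for a.e.\ $\zeta\in\partial\mathbb D$; since $\infty\in\partial U$, the prime ends of $U$ lying over $\infty$ form a nonempty closed subset of $\partial\mathbb D$. The heart of the proof is the claim that \emph{if $\partial U$ is locally connected at even one point, then this set of prime ends over $\infty$ contains a nondegenerate arc $I$}. By Lindel\"of's theorem $\psi$ then extends continuously to $I$ with $\psi|_{I}\equiv\infty$, i.e.\ $U$ has a ``fat channel to $\infty$''. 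I would then rule this out dynamically: writing $g=\psi^{-1}\circ f\circ\psi$, a holomorphic self-map of $\mathbb D$ with $g(\mathbb D)=\mathbb D$, the functional equation $f\circ\psi=\psi\circ g$ forces the boundary behaviour of $g$ to carry $I$ (modulo a null set) inside the prime-end set over $\infty$. When $U$ is a Siegel disk this is transparent: $g$ is an irrational rotation, its orbit of the arc $I$ is dense in $\partial\mathbb D$, so every prime end lies over $\infty$, i.e.\ $\psi^{*}\equiv\infty$ and $U=\mathbb C$ --- absurd. In the attracting and parabolic cases the boundary map of $g$ likewise has no invariant proper arc, and one reaches the same contradiction, using in addition that (by the Reductions) no orbit in $U$ accumulates at $\infty$ and that $\{f^{n}\}$ is normal on the connected set $U$. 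Hence $\partial U$ is locally connected at no point.

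\emph{Main obstacle.} The delicate step is the emphasized claim. Local connectivity of $\partial U$ at a single point is strictly weaker than local connectivity of all of $\partial U$ --- for example it is not preserved under Hausdorff limits --- so Carath\'eodory's theorem cannot be applied directly; one must instead exploit, as in \cite{BW91}, that $\partial U$ is an \emph{unbounded} continuum, so that $\infty$ already appears in the prime-end impression of the chosen point, and then show that local connectivity there forces $\infty$ into the impressions of an entire sub-arc of prime ends. With this prime-end analysis in hand the rest is routine, given Baker's theorem and the classification of periodic Fatou components recalled in the text.
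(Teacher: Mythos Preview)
The paper does not supply a proof of this lemma at all: it is stated with the citation \cite{BW91} and used as a black box. So there is nothing to compare your attempt against in this manuscript.

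For what it is worth, your outline follows the Baker--Weinreich strategy that the citation points to: pass to a simply connected invariant $U$, pull the dynamics back via a Riemann map to an inner function $g$ on $\mathbb D$, study the (closed, $g^{*}$-invariant) set of prime ends whose impression contains $\infty$, and use the boundary dynamics of $g$ to force this set to be all of $\partial\mathbb D$. Two caveats on your sketch. First, your reduction ``replace $U$ by a forward image'' quietly assumes pre-periodicity; the lemma as stated covers wandering domains as well, though in the paper only the pre-periodic case is needed, as you note. Second, the step you flag as the ``main obstacle'' --- that local connectivity at a single point forces a whole arc of prime ends over $\infty$ --- is not the way Baker--Weinreich argue: they show directly that $\infty$ lies in the impression of \emph{every} prime end (using the invariance of the impression set under the boundary map and, in the non-Siegel case, properties of the Denjoy--Wolff dynamics of $g$), from which the failure of local connectivity at every boundary point follows. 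Your formulation with an invariant arc works transparently only in the Siegel case; in the attracting/parabolic case the boundary map of $g$ can have fixed points and invariant proper arcs, so ``no invariant proper arc'' is not available as stated, and one really needs the impression-based argument.
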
 

Therefore, we may assume that the periodic Fatou component $B$, which is an attracting or parabolic basin, contains a critical point $u$, so that $f(B)$ contains the critical value $v$. Moreover, if $B$ is the basin of an attracting periodic point, we assume that it is a superattracting basin by applying a quasiconformal surgery.

\section{Construction of puzzle}\label{sec: construction of puzzles}
In this section, we will construct Yoccoz puzzles for the cosine function $f$. Based on the assumptions in \S \ref{sec: basic settings}, $B$ is a bounded periodic Fatou component of period $m \ge 1$, containing a critical point $u$. It is the basin of either a superattracting or parabolic periodic point. 

\subsection{Construction of unbounded puzzle}\label{ss: construct unbounded puzzle pieces}

Let us recall the construction of puzzles for polynomials: a collection of periodic external and internal rays, which have the common landing points, an equipotential outside the filled-in Julia set, and equipotentials inside the periodic Fatou components, which separate the boundaries of the Fatou components and the critical orbits in the cycle of Fatou components. 

In order to construct the puzzles for $f$, we also need some periodic rays landing on $\partial B$, and equipotentials separating $B$ from $\infty$ and the critical orbit. 

\textbf{Internal rays:} The structure of internal rays and equipotentials in bounded Fatou components exists for the basin of both superattracting and parabolic periodic points. If $B$ is a basin of superattracting periodic point, then it is well-known that there exists a conformal mapping $\phi_B: B \to \mathbb{D}$ (since $B$ contains only one critical point), such that $\phi_B \circ f^m \circ \phi^{-1}_B(z)=z^2$ for $z \in \mathbb{D}$. Define the internal ray of angle $\theta \in [0,1)$ by $\phi^{-1}_B((0,1)e^{2\pi i \theta})$, and the equipotential of potential $t$ by $\phi^{-1}_B(\{t e^{2\pi i \alpha}: \alpha \in [0,1)\})$. If $B$ is a basin of parabolic periodic point, the existence of internal rays and equipotentials can be found in \cite{Roe10}. The internal ray of angle $\theta$ is denoted by $R_B(\theta)$, and the equipotential of potential $t$ is denoted by $E_B(t)$. 

\textbf{Dynamic rays:} For the cosine function $f$, $\infty$ is no longer a superattracting fixed point, rather than an essential singularity. So there is no external ray. But a subset of $J(f)$, the escaping set $I(f)$, can still be described by rays (see \cite{RS08}). 

Let $\Gamma$ be the union of the segment connecting $\pm v$ and the vertical ray connecting $v$ to $\infty$,  and let $P_{j,k}$, $j \in \{0,1\}$, $k \in \mathbb{Z}$ be the components of $f^{-1}(\mathbb{C} \setminus \Gamma)$. Here the half strip $P_{0,k}$ lies in the right half plane $\{z \in \mathbb{C}: \mathrm{Re} \; (z-u)>0\}$ while $P_{1,k}$ lies in the left half plane $\{z \in \mathbb{C}: \mathrm{Re} \; (z-u)<0\}$. And the segment $[u_{2k}, u_{2k+2}]$ lies on the boundary of $P_{j,k}$, $j=0,1$, $k \in \mathbb{Z}$. 

$f: P_{j,k} \to \mathbb{C} \setminus \Gamma$ is conformal. Then every point $z \in I(f)$, for sufficiently large $n$, $f^n(z)$ lies in a unique $P_{j_n, k_n}$. So $z$ is associated with a unique sequence $\underline{s} \in (\{0,1\} \times \mathbb{Z})^{\mathbb{N}} =:\Sigma$, which is called the address of $z$. Conversely, every sequence $\underline{s} \in \Sigma$ coresponds to a subset $g_{\underline{s}}$ of $I(f)$. Here $g_{\underline{s}}$ is either empty or a $C^{\infty}$-curve tending to $\infty$. Then $g_{\underline{s}}$ is called a dynamic ray, and $\underline{s}$ is called its address. Intuitively, for $z \in g_{\underline{s}}$, $n$ sufficiently large, $f^{n}(z) \in P_{s_n}$, where $s_n=(j_n, k_n)$ is the n-th entry of $\underline{s}$. 

The dynamic ray $g_{\underline{s}}$ can be written as a map $g_{\underline{s}}: (t_{\underline{s}},+\infty) \to I(f)$. Here $t_{\underline{s}} \ge 0$ is minimal such that $g_{\underline{s}}(t_{\underline{s}},+\infty)$ can be defined. The dynamic rays have the following properties:
\begin{lemma}[\cite{RS08}, Theorem 4.1 \& Proposition 4.3]\label{lm: properties of dynamic rays}
    Let $g_{\underline{s}}:(t_{\underline{s}},+\infty) \to I(f)$ be a (non-empty) dynamic ray, $F(t)=e^t-1$, and $\sigma: \Sigma \to \Sigma$ be the shift on $\Sigma$. Then 
    \begin{enumerate}
        \item $f(g_{\underline{s}}(t))=g_{\sigma(\underline{s})}(F(t))$;
        \item Suppose that $\underline{s}=(s_0, s_1, \ldots)$, $s_n=(j_n, k_n)$, $n \ge 0$. Then 
        \begin{equation*}
            g_{\underline{s}}(t)=
        \begin{cases}
            t-\log a+2k_0\pi i +O_{\underline{s}}(e^{-t}), & \quad t \to +\infty, \quad \text{if} \quad j_0=0 \\
            -t+\log b +2k_0\pi i +O_{\underline{s}}(e^{-t}), & \quad t \to +\infty, \quad \text{if} \quad j_0=1
        \end{cases}.
        \end{equation*}
    \end{enumerate}
\end{lemma}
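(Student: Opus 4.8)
This is the ``construction of dynamic rays'' for the cosine family, and the plan is to prove it by the standard pull-back (hyperbolic contraction) scheme, with two quantitative inputs driving everything. First, $f$ is strongly expanding far out: there is $R_0>0$ so that $|f'(z)|\ge c\,e^{|\mathrm{Re}(z-u)|}$ whenever $|\mathrm{Re}(z-u)|\ge R_0$, equivalently the inverse branches $L_{j,k}\colon\mathbb C\setminus\Gamma\to P_{j,k}$ of $f$ satisfy $|L_{j,k}'(w)|\le C/|w|$ for $|w|$ large and bounded away from $\Gamma$ (indeed $f'/f\to 1$ in each half-tract). Second, the branches have an explicit expansion: since $f(z)=ae^{z}\bigl(1+\tfrac ba e^{-2z}\bigr)$ on the right half-tracts and $f(z)=be^{-z}\bigl(1+\tfrac ab e^{2z}\bigr)$ on the left, one has $L_{0,k}(w)=\log w-\log a+2k\pi i+O(1/w^{2})$ and $L_{1,k}(w)=-\log w+\log b+2k\pi i+O(1/w^{2})$ for the appropriate branch of $\log$ as $|w|\to\infty$.

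\textbf{Construction of the ray and part (1).} Fix an address $\underline s=(s_0,s_1,\dots)$, $s_n=(j_n,k_n)$. For a potential $t$ above a threshold depending on $\underline s$, let $\zeta_N$ be the obvious escaping ``tail'' of potential $F^{N}(t)$ sitting deep in $P_{s_N}$ (real part $\approx(-1)^{j_N}F^{N}(t)$, imaginary part $\approx 2k_N\pi$), and put $\gamma_N(t):=L_{s_0}\circ L_{s_1}\circ\cdots\circ L_{s_{N-1}}(\zeta_N)$. The points $w_0,w_1,\dots,w_N=\zeta_N$ obtained along the way form an orbit segment with $|w_k|\gtrsim F^{k}(t)$, so by the chain rule the outer composition $L_{s_0}\circ\cdots\circ L_{s_{N-1}}$ contracts distances by $\lesssim\prod_{k=1}^{N}C/F^{k}(t)$; since $F(t)=e^{t}-1$ iterates to a rapidly growing tower, this product is summable in $N$, uniformly on compact $t$-intervals. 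Hence $g_{\underline s}(t):=\lim_N\gamma_N(t)$ exists, is independent of the precise tails, and (by Koebe/bounded-distortion estimates applied to $f^{-N}$) is a $C^{\infty}$ curve on an interval $(t_{\underline s},+\infty)$. Applying $f$ peels off the outermost branch, $f(\gamma_N(t))=L_{s_1}\circ\cdots\circ L_{s_{N-1}}(\zeta_N)$, which is exactly the $(N-1)$st approximant to $g_{\sigma(\underline s)}$ at potential $F(t)$ (the model $F(t)=e^{t}-1$ is chosen precisely so that $\mathrm{Re}\,f(z)\approx e^{|\mathrm{Re}(z-u)|}$ becomes $t\mapsto F(t)$ on potentials); letting $N\to\infty$ yields part (1).

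\textbf{Part (2).} For $t\to+\infty$ the point $g_{\underline s}(t)$ is deep in $P_{s_0}$, so I would feed part (1) back into the branch expansion via $g_{\underline s}(t)=L_{s_0}\bigl(g_{\sigma(\underline s)}(F(t))\bigr)$. An induction over $n$ along the orbit (using part (1) and the same expansions) shows $\mathrm{Re}\,g_{\sigma(\underline s)}(F(t))=\pm F(t)+O(1)$ while $\mathrm{Im}\,g_{\sigma(\underline s)}(F(t))$ stays bounded, hence $\log g_{\sigma(\underline s)}(F(t))=\log(e^{t}-1)+O(e^{-t})=t+O(e^{-t})$. Substituting this into the expansion of $L_{0,k_0}$ (resp.\ $L_{1,k_0}$) gives $g_{\underline s}(t)=t-\log a+2k_0\pi i+O_{\underline s}(e^{-t})$ when $j_0=0$ (resp.\ $-t+\log b+2k_0\pi i+O_{\underline s}(e^{-t})$ when $j_0=1$), the $O(1/w^{2})$ correction being absorbed into $O_{\underline s}(e^{-t})$.

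\textbf{Main obstacle.} The technical core is the convergence argument: one must keep the entire sequence of partial pull-backs uniformly inside the region where the expansion estimate is valid and uniformly away from the singular slit $\Gamma$ (where the branches $L_{j,k}$ degenerate), and one must verify a priori that a candidate escaping tail of the required potential exists for $\underline s$ at all — i.e.\ determine when $g_{\underline s}$ is nonempty and identify $t_{\underline s}$. The standard device is to work only above a threshold potential and exploit that $F^{n}(t)$ grows like an iterated exponential, so it dominates the additive drifts $\log a$, $\log b$ and $2k_n\pi i$ (this is where a mild growth condition on $(|k_n|)$ enters); a bounded-distortion argument then upgrades pointwise convergence to the smoothness and local uniformity needed for part (2). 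Since the statement concerns only nonempty rays, this existence input may be quoted from \cite{RS08}, and the real content is the two estimates above.
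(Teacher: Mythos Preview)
The paper does not prove this lemma at all: it is stated with a citation to \cite{RS08}, Theorem~4.1 and Proposition~4.3, and used as a black box. Your proposal is a correct sketch of the construction carried out in \cite{RS08} itself --- the pull-back scheme with the contraction estimate $|L_{j,k}'(w)|\lesssim 1/|w|$, the product $\prod_k C/F^k(t)$ controlling convergence, and the branch expansion $L_{j,k}(w)=\pm\log w + \text{const} + O(1/w^2)$ giving the asymptotics --- so there is no discrepancy in approach, only in scope: the paper imports the result, while you reprove it. One small caveat: in your Part~(2), the claim that $\mathrm{Im}\,g_{\sigma(\underline s)}(F(t))$ stays bounded is not quite uniform in $\underline s$ (it depends on $k_1$), which is precisely why the error term carries the subscript $O_{\underline s}$; you acknowledge this implicitly but it is worth making explicit that the constant in the $O$ depends on the address.
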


\begin{remark}\label{rmk: eventually contained}
    Let $U \subset \mathbb{C}$ be an unbounded domain and $\gamma:(0,+\infty) \to \mathbb{C}$ be a curve tending to $\infty$. We say $\gamma$ is \textbf{eventually contained} in $U$, if for sufficiently large $T>0$, $\gamma((T, +\infty)) \subset U$. Therefore, Lemma \ref{lm: properties of dynamic rays}, 2 implies that a dynamic ray $g_{\underline{s}}$ is eventually contained in $P_{j_0,k_0}$ provided that $\underline{s}=((j_n,k_n))_{n \ge 0}$. 
\end{remark}

The dynamic ray $g_{\underline{s}}$ is called to land at some point $z_0 \in \mathbb{C}$, if the following limit holds:
\begin{equation*}
    \lim_{t \to t_{\underline{s}}} g_{\underline{s}}(t)=z_0.
\end{equation*}

For polynomials, there is a well-known landing theorem: if the Julia set is connected, then the external rays with rational angles always land, and every repelling or parabolic periodic point is the landing point of at least one and at most finitely many external rays. When $f$ is a cosine function with the post-critical set bounded, a similar landing theorem holds:
\begin{lemma}[\cite{BR20}, Theorem 1.4]\label{lm: landing theorem}
    Let $f$ be a cosine function whose post-critical set $P(f)$ is bounded. Then every periodic dynamic ray $g_{\underline{s}}$ lands at a repelling or parabolic periodic point. And conversely, every repelling or parabolic point is the landing point of at least one and at most finitely many periodic dynamic rays. 
\end{lemma}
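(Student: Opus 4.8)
The plan is to follow the classical Douady--Hubbard landing theorem for polynomials, with the hyperbolic metric of $\mathbb{C}\setminus\overline{P(f)}$ playing the role of the Böttcher/equipotential structure. A first observation (used throughout) is that, since $P(f)$ is bounded, no singular value of $f$ lies on a dynamic ray (a point on a ray belongs to $I(f)$, hence escapes), and therefore no preimage of a critical point lies on a ray either; a critical point of $f^{\circ p}$ is such a preimage. Consequently, for a periodic address $\underline{s}$ with $\sigma^{p}(\underline{s})=\underline{s}$, the branch of $(f^{\circ p})^{-1}$ that follows the tail of $g_{\underline{s}}$ continues analytically all the way to potential $0$; since $F^{\circ p}(t)>t$, minimality of $t_{\underline{s}}$ then forces $t_{\underline{s}}=0$. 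So $g_{\underline{s}}\colon(0,\infty)\to I(f)$ satisfies $f^{\circ p}\circ g_{\underline{s}}=g_{\underline{s}}\circ F^{\circ p}$ with $F^{\circ p}(0)=0$.

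For the first assertion, let $\Lambda=\bigcap_{\epsilon>0}\overline{g_{\underline{s}}((0,\epsilon])}$ be the accumulation set of the ray at potential $0$. Using that periodic rays remain bounded at bounded potential (a known consequence of $P(f)$ being bounded), $\Lambda$ is a non-empty compact connected set; since $f$ has bounded singular set we have $I(f)\subset J(f)$ (\cite{EL92}), so $\Lambda\subset J(f)$; and from the functional equation together with continuity of $F^{\circ p}$ at $0$ one gets $f^{\circ p}(\Lambda)=\Lambda$. The key step is that $\Lambda$ is a single point $z_{0}$. On the part of the ray that avoids $\overline{P(f)}$, the branch of $(f^{\circ p})^{-1}$ following the ray strictly contracts the hyperbolic metric $\rho$ of $\mathbb{C}\setminus\overline{P(f)}$, uniformly on a neighborhood of $\Lambda$; hence the $\rho$-lengths of the sub-tails $g_{\underline{s}}\big((0,(F^{\circ p})^{-n}\epsilon]\big)$ decay geometrically, while each of these sub-tails has closure containing $\Lambda$, forcing $\mathrm{diam}_{\rho}\Lambda=0$ whenever $\Lambda\cap\overline{P(f)}=\emptyset$; the remaining possibility (where $\rho$ degenerates) is handled by a finer, Fatou-coordinate estimate of the ray inside repelling petals. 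Thus $g_{\underline{s}}$ lands at $z_{0}$ with $f^{\circ p}(z_{0})=z_{0}$. Since $z_{0}\in J(f)$ it is not attracting, and since the branch of $(f^{\circ p})^{-1}$ fixing $z_{0}$ is holomorphic there and pushes the ray toward $z_{0}$, the snail lemma (applied to an inverse branch of a suitable iterate fixing $z_{0}$) rules out an irrationally indifferent multiplier; so $z_{0}$ is repelling or parabolic.

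For the converse, fix a repelling (resp.\ parabolic) periodic point $z_{0}$ of period $p$. Since $I(f)$ is dense in $J(f)$ and $z_{0}\in J(f)$, choose $z_{1}=g_{\underline{t}}(\tau)\in I(f)$ close to $z_{0}$, inside a linearizing disk (resp.\ a repelling petal) of $z_{0}$. Because $P(f)$ is bounded, the branch of the inverse of the relevant iterate fixing $z_{0}$ and preserving that neighborhood is well defined, so $w_{n}:=(f^{\circ p})^{-n}(z_{1})\to z_{0}$, with $w_{n}$ lying on a ray $g_{\underline{t}^{(n)}}$ at potential $(F^{\circ p})^{-n}(\tau)\to 0$. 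As the forward orbits of the $w_{n}$ shadow that of $z_{0}$, the addresses $\underline{t}^{(n)}$ stabilise to a periodic address $\underline{s}^{\ast}$, and by the stability of dynamic ray tails under perturbation of the address the tails $g_{\underline{t}^{(n)}}$ converge to a tail of $g_{\underline{s}^{\ast}}$ accumulating at $z_{0}$; being periodic, $g_{\underline{s}^{\ast}}$ lands (by the first part), necessarily at $z_{0}$. Finiteness follows by the usual argument: the germs at $z_{0}$ of distinct rays landing there are pairwise disjoint arcs ending at $z_{0}$, cyclically permuted by the locally injective, $z_{0}$-expanding map $f^{\circ p}$, and a standard sector/length estimate bounds their number.

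I expect the genuine obstacle to be the step that $\Lambda$ is a single point, i.e.\ that a periodic ray cannot accumulate on a non-degenerate continuum: the hyperbolic-contraction argument is transparent only while the ray stays away from $\overline{P(f)}$, and controlling it near $\overline{P(f)}$ — inside repelling petals at a parabolic cycle, where the metric of $\mathbb{C}\setminus\overline{P(f)}$ degenerates — requires the quantitative estimates of \cite{BR20}. The other non-routine input is the stability of dynamic ray tails under perturbation of the address, needed to produce the landing ray at $z_{0}$; by comparison the snail-lemma exclusion of Cremer points and the finiteness count are routine adaptations of the polynomial case.
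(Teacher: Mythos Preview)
The paper does not prove this lemma at all: it is quoted verbatim as Theorem~1.4 of \cite{BR20} and used as a black box, with the accompanying remark that the cited result is in fact much more general (for all ``criniferous'' entire functions). There is therefore no proof in the paper to compare your proposal against.

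That said, your sketch is a reasonable outline of how the classical Douady--Hubbard argument is adapted to the transcendental setting, and you have correctly flagged the two places where real work is required: showing that the accumulation set $\Lambda$ of a periodic ray is a point (the hyperbolic-contraction argument on $\mathbb{C}\setminus\overline{P(f)}$ degenerates exactly when $\Lambda$ meets $\overline{P(f)}$, and this is where the quantitative estimates of \cite{BR20} enter), and the stability of ray tails needed to exhibit a periodic ray landing at a given repelling or parabolic point. As written, your proposal is a plausible proof \emph{outline}, not a complete proof: the petal case for landing and the compactness/convergence argument producing $\underline{s}^{\ast}$ are asserted rather than carried out, and these are precisely the steps that \cite{BR20} devotes substantial effort to. For the purposes of this paper, however, citing \cite{BR20} is the intended route.
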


\begin{remark}
    Theorem 1.4 in \cite{BR20} is much more general. It concerns all `criniferious' transcendental entire functions. But we just need the form above. 
\end{remark}

\textbf{Yoccoz Puzzles:} Now we are ready to construct the puzzles. Similar to polynomials, we start with a periodic internal ray $R_B(\theta)$ of period $p$ in $B$, such that it lands at a repelling periodic point $z_0 \in \partial B$, and it avoids the orbit of $-2v$. By Lemma \ref{lm: landing theorem}, there is a periodic dynamic ray $g_{\underline{s}}$ landing at $z_0$, which is also of period $p$. Let $\Gamma_0=\bigcup_{j=0}^{p-1} f^j(\overline{R_B(\theta) \cup g_{\underline{s}}})$, which is an $f$-forward invariant graph. Let $\Gamma_{\infty}=\bigcup_{n \ge 0} f^{-n}(\Gamma_0)$. Then $\Gamma_{\infty}$ is completely invariant. Choose an equipotential $E_B(t) \subset B$, separating $\partial B$ from the critical point $u$. 

Let $E_0=\bigcup_{j=0}^{m-1} f^j(E_B(t))$. The components of $\mathbb{C} \setminus (E_0 \cup \Gamma_0)$ intersecting with $J(f)$ are called the puzzle pieces of depth $0$. Inductively, if $Q_n$ is a puzzle piece of depth $n$, then every component of $f^{-1}(Q_n)$ is called a puzzle piece of depth $n+1$. For $z \in J(f) \setminus \Gamma_{\infty}$, the puzzle piece of depth $n$ containing $z$ is denoted by $Q_n(z)$. Then it is easy to see that $Q_{n+1}(z) \subset Q_n(z)$, and if $Q_n$ and $Q'_n$ are both puzzle pieces of depth $n$, then they are either the same or disjoint.  

\subsection{Modification of unbounded puzzle pieces}\label{ss: modification of unbounded puzzles}
The puzzle pieces constructed in \S \ref{ss: construct unbounded puzzle pieces} are all unbounded. However, since we only concern about a bounded Fatou component $B$, we can cut down the puzzle pieces into bounded ones. 

\begin{lemma}\label{lm: separated along the imaginary axis}
    Let $g_{\underline{s}} \subset \Gamma$ be a dynamic ray landing at $\alpha \in U$, where $U$ is a periodic Fatou component containing a critical value. Then there exist two dynamic rays $g_{\underline{s'}}$, $g_{\underline{s''}} \subset f^{-1}(g_{\underline{s}})$, satisfying:
    \begin{enumerate}
        \item $g_{\underline{s'}}$, $g_{\underline{s''}}$ land at the boundary of the same component of $f^{-1}(U)$;
        \item $\underline{s'}=((0,k'_0), \underline{s})$, $\underline{s''}=((1,k''_0), \underline{s})$.
    \end{enumerate}
    In other words, the tails of these two rays enter both half planes $\{z \in \mathbb{C}: \mathrm{Re} (z-u)>0\}$ and $\{z \in \mathbb{C}: \mathrm{Re} (z-u)<0\}$.  
\end{lemma}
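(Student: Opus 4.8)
The plan is to track how the preimage of a single dynamic ray under $f$ splits into the family $f^{-1}(g_{\underline{s}})$, and then to use the geometry of the partition $\{P_{j,k}\}$ together with the fact that $U$ contains a critical value to force two of these preimage rays to land on the boundary of one and the same component of $f^{-1}(U)$. First I would recall from Lemma \ref{lm: properties of dynamic rays}(1) that $f(g_{\underline{t}}(s)) = g_{\sigma(\underline{t})}(F(s))$, so a dynamic ray $g_{\underline{s'}}$ maps onto $g_{\underline{s}}$ precisely when $\sigma(\underline{s'}) = \underline{s}$, i.e. when $\underline{s'} = (\tau, \underline{s})$ for some symbol $\tau = (j,k) \in \{0,1\}\times\mathbb{Z}$. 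By Remark \ref{rmk: eventually contained}, each such $g_{(\tau,\underline{s})}$ is eventually contained in $P_{j,k}$; in particular for each fixed $j\in\{0,1\}$ there is at least one admissible $k$ (indeed the full family of preimage rays is indexed by all of $\{0,1\}\times\mathbb{Z}$, restricted to those addresses that are non-empty). So the content to be proved is really: among the preimage rays landing on $\partial(f^{-1}U)$, one can pick one with first symbol of the form $(0,k_0')$ and another with first symbol $(1,k_0'')$ that land on the \emph{same} component of $f^{-1}(U)$.

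Next I would set up the preimage structure of $U$. Since $U$ is a periodic Fatou component containing a critical value $v$, by the normalization in Section \ref{sec: basic settings} the component $U' := f^{-1}(U)$ containing the critical point $u$ is mapped onto $U$ as a degree-two branched cover branched at $u$; moreover $U' + 2k\pi i$ are the other components of $f^{-1}(U)$, each mapped conformally (degree one) onto $U$, because $f(u + 2k\pi i) = v$ only for the even translates while the translates by $2\pi i$ of $U'$ need not map to $U$ — here I must be slightly careful and instead argue directly: the component $U'$ of $f^{-1}(U)$ containing $u$ straddles the critical point $u$, and $u$ lies on the common boundary segment $[u_{2k}, u_{2k+2}]$ of $P_{0,k}$ and $P_{1,k}$ (recall $\Gamma$ runs from $v$ to $-v$ and then vertically to $\infty$, so its full preimage includes these segments through the critical points). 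Hence $U'$ meets both half-planes $\{\operatorname{Re}(z-u)>0\}$ and $\{\operatorname{Re}(z-u)<0\}$, i.e. $U' \cap P_{0,k} \neq \emptyset$ and $U' \cap P_{1,k'} \neq \emptyset$ for appropriate $k,k'$. This is the crucial structural input, and the whole argument rests on correctly identifying which component of $f^{-1}(U)$ contains the critical point and observing that it is cut by the $f$-preimage of $\Gamma$ through $u$.

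Now I would finish as follows. Choose the component $U'$ of $f^{-1}(U)$ containing the critical point $u$. Since $g_{\underline{s}}$ lands at $\alpha \in U$ (an interior point of the Fatou component, or on its boundary — in either case the pulled-back ray lands on $\partial U'$), the two branches of $f^{-1}$ on the degree-two cover $f: U' \to U$ — equivalently, the two local inverses of $f$ carrying a neighborhood of $\alpha$ to the two "sides" of $u$ inside $U'$ — pull $g_{\underline{s}}$ back to two dynamic rays $g_{\underline{s'}}$ and $g_{\underline{s''}}$ landing on $\partial U'$. One of these two inverse branches has image eventually in $\{\operatorname{Re}(z-u)>0\}$, the other eventually in $\{\operatorname{Re}(z-u)<0\}$, because the two sheets of $U'$ over $U$ lie on opposite sides of the critical point $u$ which sits on the boundary segment separating $P_{0,k}$ from $P_{1,k}$; consequently their addresses have first symbols $(0,k_0')$ and $(1,k_0'')$ respectively. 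Both land on $\partial U'$, which is what item (1) asserts, and item (2) is exactly the statement about first symbols just established. The main obstacle is the careful justification of the middle paragraph: one must verify that the component of $f^{-1}(U)$ through $u$ genuinely intersects both half-planes $P_{0,\cdot}$ and $P_{1,\cdot}$ — this uses that the slit $\Gamma$ was chosen through $v$ (so its preimage passes through every critical point), that $U$ contains $v$ and hence $U'$ contains $u$, and that $f$ restricted to $U'$ is a proper degree-two map whose two sheets are swapped by the local symmetry $z \mapsto 2u - z$ near $u$; a clean way to see it is to take a short arc in $U$ through $v$ transverse to $\Gamma$, pull it back through $u$, and observe that its two ends lie in $P_{0,k}$ and $P_{1,k}$.
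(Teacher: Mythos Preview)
Your argument is essentially correct but follows a genuinely different route from the paper. The paper's proof is a short counting argument: the preimage rays are $g_{((j,k),\underline{s})}$ with landing points $\alpha_{j,k}$ satisfying $\alpha_{j,k+1}=\alpha_{j,k}+2\pi i$; since the Fatou components are bounded, the points $\alpha_{0,k}$ (varying $k$) lie on boundaries of \emph{distinct} components of $f^{-1}(U)$, and likewise for $\alpha_{1,k}$. But every component $U'$ of $f^{-1}(U)$ contains exactly one critical point, so $f:U'\to U$ has degree $2$ and $\partial U'$ carries exactly two preimages of $\alpha$. Pigeonhole then forces one of them to be an $\alpha_{0,\cdot}$ and the other an $\alpha_{1,\cdot}$.

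You instead exploit the global involution $\iota:z\mapsto 2u-z$, which satisfies $f\circ\iota=f$. This is a valid and pleasant alternative: $\iota$ fixes the component $U'$ through $u$, permutes the two preimage rays landing on $\partial U'$, and cannot fix either one individually (a dynamic ray is eventually in a single half-plane by Lemma~\ref{lm: properties of dynamic rays}, while $\iota$ swaps the half-planes), so it must swap them; hence their first symbols are $(0,k'_0)$ and $(1,k''_0)$. Two remarks: first, your detour through ``$U'$ meets both half-planes'' and the short arc through $v$ is not quite the right justification---the rays live on $\partial U'$, not in $U'$, so the clean statement is the involution argument on the rays themselves, which you do mention. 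Second, you tacitly assume the critical value in $U$ is $v$ so that $u\in U'$; if it is $-v$ one should take $U'$ through $u+\pi i$ and use $z\mapsto 2(u+\pi i)-z$, which equally swaps the half-planes $\{\mathrm{Re}(z-u)\gtrless 0\}$. The paper's approach has the advantage of establishing the conclusion for \emph{every} component of $f^{-1}(U)$ at once (which is what the subsequent ``bounded puzzle pieces'' paragraph actually uses), whereas yours exhibits only one component; your approach, on the other hand, is more explicitly geometric and identifies the deck transformation.
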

\begin{proof}
    The preimages of $g_{\underline{s}}$ are rays with addresses $((j,k), \underline{s})$, $j=0,1$, $k \in \mathbb{Z}$. Denote by $\alpha_{j,k}$ the landing point of $g_{((j,k), \underline{s})}$, $j=0,1$, $k \in \mathbb{Z}$. Note that $\alpha_{j,k+1}=\alpha_{j,k}+2\pi i$. Thus $g_{((0,k), \underline{s})}$ cannot land at the boundary of the same Fatou component. But for every component $U'$ of $f^{-1}(U)$, $\partial U$ contains exact two preimages of $\alpha$ since $U$ has a unique critical point. Thus $\partial U$ contains $\alpha_{0,k}$ and $\alpha_{1,k'}$ for some $k, k' \in \mathbb{Z}$. 

    Therefore, $\underline{s'}=((0,k), \underline{s})$, $\underline{s''}=((1,k'), \underline{s})$ are the desired addresses. 
\end{proof}

\textbf{Bounded puzzle pieces:} Consider the graph $\Gamma_1=f^{-1}(\Gamma_0)$. By Lemma \ref{lm: properties of dynamic rays}, the dynamic rays in $\Gamma_1$ are eventually horizontal. By Lemma \ref{lm: separated along the imaginary axis}, the dynamic rays in $\Gamma_1$ are eventually contained in both half planes $\{z \in \mathbb{C}: \mathrm{Re} (z-u)>0\}$ and $\{z \in \mathbb{C}: \mathrm{Re} (z-u)<0\}$. Thus the dynamic rays in $\Gamma_1$ together with the internal rays landing at the same point separate $\mathbb{C}$ into countably many strips, who are eventually horizontal and have uniformly bounded height. 

Since $B$ and $P(f)$ are bounded, we can cover them by finitely many strips above. Therefore, there is a horizontal strip $S=\{z \in \mathbb{C}: |\mathrm{Im} \; z| \le M\}$, containing $\bigcup_{j=0}^{m-1} f^j(B)$ and $P(f)$. Moreover, we may choose $S$ such that the puzzle pieces of depth $1$, which intersect with $\partial f^j(B)$, $j=0, \ldots, m-1$ or contain points in $P(f)$, are contained in $S$. Thus every puzzle piece of depth $n \ge 1$, intersecting with $\partial f^j(B)$, $j=0, \ldots, m-1$ or containing points in $P(f)$, are contained in $S$. 

For $M'>0$, $R=\{z \in S: |\mathrm{Re} \; (z-u)| \le M'\}$. $E=f(R)$ is a domain surrounded by an ellipse with foci $\pm v$. Choose $M'$ sufficiently large, then $R \Subset E$. For a puzzle piece $Q_1$ of depth $1$ intersecting $\partial f^j(B)$, let $P_1$ be the interior of $Q_1 \cap E$. $P_1$ is a modified puzzle piece of depth $1$. Then every component $P_2$ of $P_1$, which intersects with $\partial f^j(B)$ or contains points in $P(f)$, lies in $R$. This implies that $P_2$ is contained in some modified puzzle piece of depth $1$. Inductively, if $P_n$ is a modified puzzle piece of depth $n$, then every component of $f^{-1}(P_n)$, which intersects with $\partial f^j(B)$, $j=0, \ldots, m-1$ or contains points in $P(f)$, is called a modified puzzle piece of depth $n+1$. For $z \in \partial f^j(B)$, $j=0, \ldots, m-1$, or $z \in P(f)$, which is not in $\Gamma_{\infty}$, denote by $P_{n}(z)$ the modified puzzle piece of depth $n$ containing $z$. Then $P_{n+1}(z) \subset P_n(z)$.

\textbf{Thickened puzzle pieces:} For now, we have constructed bounded puzzle pieces for points in $\partial f^j(B)$, $j=0, \ldots, m-1$ and in $P(f)$ (except for points in $\Gamma_{\infty}$). In order to use techniques such as hyperbolic metric and conformal moduli, we need to make the deeper puzzle pieces compactly contained in shallower ones. This method is called `thickeness' (see \cite{Mil00}).

First note that it is possible for $P_{n+1}(f^n(z))$ to share a common boundary
with $P_{n}(f^n(z))$, only if $Q_0(z)$ and $Q_1(z)$ shares a common boundary, which consists of an internal ray $R(\theta)$, a dynamic ray $g_{\underline{s}}$ and their common landing point $\alpha$. 

The periodic point $\alpha $ is repelling by our construction. So we can choose a neighborhood of $\alpha$ given by Koening's theorem. Without loss of generality, we can choose a disk $D(\alpha, \epsilon)$. Suppose that $\alpha \in \partial f^{j_0}(B)$, $0 \le j_0 \le m-1$. Since $f^{j_0}(B)$ is an attracting or parabolic basin, the repelling periodic points are dense on $\partial f^{j_0}(B)$ (\cite{PZ94}). Thus there is another repelling periodic point $\alpha' \in D(\alpha, \epsilon) \setminus P_0(z)$. By Lemma \ref{lm: landing theorem}, there is an internal ray $R(\theta')$, and a dynamic ray $g_{\underline{s'}}$ landing at $\alpha'$. These two curve $R(\theta')$ and $g_{\underline{s'}}$ intersect $\partial D(\alpha, \epsilon)$ at two points $w,w'$. We replace the original boundary $R(\theta)$, $g_{\underline{s}}$ by $R(\theta')$, $g_{\underline{s'}}$ and an arc of $\partial D(\alpha, \epsilon)$ connecting $w, w'$, which is not in $P_0(z)$. Denote by $\tilde{P}_0(z)$ the thickened puzzle piece. Then $Q_0(z) \subset \tilde{Q}_0(z)$. Repeating the previous `cutting down' procedure, we obtain thickened bounded puzzle pieces (denoted by $\hat{P}_n$) satisfying:
\begin{itemize}
    \item $\hat{P}_{n+1}(z) \Subset \hat{P}_n(z)$, $n \ge 1$;
    \item If $\hat{P}_n(z)$ contains a critical point, then $P_n(z)$ already contains the same critical point.
\end{itemize}

For $z \in \partial B$, $z \notin \Gamma_{\infty}$, let $\mathrm{Imp}(z)=\bigcap_{n \ge 1} \overline{\hat{P}_n(z)}$. To prove the local connectivity of $\partial B$ at $z$, it suffices to show that $\mathrm{Imp}(z) \cap \partial B=\{z\}$. While for $z \in \Gamma_{\infty}$, we can always choose another set of internal rays and dynamic rays to construct another collection of puzzle pieces, such that $\hat{P}_n(z)$, $n \ge 1$ are well-defined.

\section{Local connectivity of boundaries of Fatou components}\label{sec: local connectivity of bounded Fatou components}
Let $z \in \partial B$, $z \notin \Gamma_{\infty}$. In this section, we prove that $\mathrm{Imp}(z) \cap \partial B=\{z\}$. 

First we prove that the impression of the unique critical value $-v$ in $J(f)$ intersects with $\partial f^j(B)$, $0 \le j \le m-1$, at a single point. To study the iteration of $z \in J(f)$ with $\hat{P}_n(z)$ well-defined for $n \ge 1$, define the tableau of $z$ by $T(z)=(\hat{P}_n(f^l(z)))_{n \ge 1, l \ge 0}$. For $l \ge 0$, let 
$$d_l=\max\{n \ge 1: \hat{P}_n(f^l(z)) \text{ contains a critical point}\}.$$ 
Since there is a unique critical value $-v$ in $J(f)$, $T(-v)$ has only three possibilities:
\begin{enumerate}
    \item $\sup_{l \ge 0} d_l <\infty$;
    \item for every $l \ge 0$, $d_l<\infty$, but $\sup_{l \ge 0} d_l=\infty$;
    \item there exists $l_0 \ge 0$, such that $d_{l_0}=\infty$.
\end{enumerate}

If $d_l<\infty$ for all $l \ge 0$, then using hyperbolic metric and Yoccoz's moduli method (see \cite{Hub93,Mil00,QWY12}), it is easy to show that $\mathrm{Imp}(-v) =\{-v\}$. 
\begin{lemma}\label{lm: impression is a single point-finite depth case}
    If $d_l<\infty$ for every $l \ge 0$, then $\mathrm{Imp}(-v)=\{-v\}$. 
\end{lemma}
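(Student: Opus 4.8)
The plan is to follow the classical Yoccoz strategy, adapted to the thickened bounded puzzle pieces $\hat P_n$ constructed in Section~\ref{sec: construction of puzzles}. The hypothesis $d_l<\infty$ for every $l\ge 0$ means that along the orbit of $-v$, each puzzle piece $\hat P_n(f^l(-v))$ stops containing a critical point once $n$ is large enough; equivalently, for each level $l$ the map $f\colon \hat P_{n+1}(f^l(-v))\to \hat P_n(f^{l+1}(-v))$ is eventually univalent in $n$. Since $P(f)$ is bounded and the thickening was arranged so that $\hat P_{n+1}(z)\Subset\hat P_n(z)$, I can work with the conformal modulus of the annuli $\hat P_n(z)\setminus\overline{\hat P_{n+1}(z)}$ (whenever the inclusion is strict) and with the hyperbolic metric of $\hat P_n(f^l(-v))$.

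First I would set up the telescoping/area argument. For a fixed $z=-v$ with $\hat P_n(z)$ defined for all $n$, consider the nested sequence $\hat P_1(z)\supset\hat P_2(z)\supset\cdots$. If infinitely many of the inclusions $\hat P_{n+1}(z)\Subset\hat P_n(z)$ contribute a definite amount of modulus, then $\sum_n \mathrm{mod}(\hat P_n(z)\setminus\overline{\hat P_{n+1}(z)})=\infty$, which by Gr\"otzsch forces $\mathrm{diam}(\mathrm{Imp}(z))=0$, hence $\mathrm{Imp}(-v)=\{-v\}$ and in particular it meets each $\partial f^j(B)$ in the single point (the relevant preimage of) $-v$. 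So the whole game is to produce a definite lower bound on infinitely many of these moduli. The standard device is: pull back a fixed ``parent–child'' annulus along the orbit. Concretely, pick the first level $l$ and depth where a critical point appears in the tableau of $-v$; there the two-to-one map $f$ squeezes the annulus and creates modulus $\tfrac12\,\mathrm{mod}$ of the image annulus; where $f$ is univalent, modulus is merely transported. Since $d_l<\infty$ for all $l$, each orbit point passes through only finitely many ``critical'' levels, but — and this is the crux — one must show the critical levels recur infinitely often along the orbit in a way that injects modulus cofinally into the nested sequence at $-v$ itself.

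The key steps, in order: (1) Observe that $-v$ is the unique critical value in $J(f)$, so a puzzle piece contains a critical point iff it contains one of the preimages $u+k\pi i$ of $\pm v$, and the orbit of such a critical point hits $-v$ after one step; thus ``critical'' levels in the tableau $T(-v)$ are controlled by the return times of the orbit $f^l(-v)$ to the pieces $P_n(-v)$. (2) Use the finiteness $d_l<\infty$ to conclude that for each $l$ the induced map from $\hat P_{n+1}(f^l(-v))$ to $\hat P_n(f^{l+1}(-v))$ is, for $n>d_l$, a conformal isomorphism, so moduli are preserved along those stretches. (3) Fix a base annulus $A_0=\hat P_{n_0}(-v)\setminus\overline{\hat P_{n_0+1}(-v)}$ with $\mathrm{mod}(A_0)=:\mu_0>0$ (which exists because $\hat P_{n_0+1}\Subset\hat P_{n_0}$). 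For any later depth $N$, show by pulling $A_0$ back along a suitable orbit segment that $\hat P_{N}(-v)$ contains a sub-annulus of modulus at least $\mu_0/2^{k}$ where $k$ is the number of critical levels encountered — and since each $d_l<\infty$ forces these critical encounters to be ``spread out'' rather than accumulating, one sums $\sum 1/2^{k}$-type contributions to get divergence; alternatively, and more cleanly, use the Yoccoz inequality on moduli directly: $\mathrm{mod}(\hat P_n(-v))\ge \tfrac12\sum_{j}\mathrm{mod}(\hat P_{d_j}(f^{l_j}(-v))\setminus\overline{\hat P_{d_j+1}})$ over the critical levels. (4) Conclude $\sum_n \mathrm{mod}(\hat P_n(-v)\setminus\overline{\hat P_{n+1}(-v)})=\infty$, hence the impression degenerates to a point, necessarily $-v$.

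The main obstacle I anticipate is step (3): ensuring that modulus actually accumulates \emph{inside the nested sequence at $-v$} rather than being lost. In the classical quadratic setting this is guaranteed by the combinatorics of the principal nest and the ``first return'' maps; here, with $f$ transcendental and two critical values but only one in $J(f)$, I would need to verify that the bounded post-critical set and the bounded modified puzzle pieces give a uniform lower bound on $\mathrm{mod}(\hat P_1(c)\setminus\overline{\hat P_2(c)})$ over the finitely many relevant critical points $c$, and that the thickening (which at each critical-level passage replaces a boundary by an arc of a round circle $\partial D(\alpha,\epsilon)$) does not degrade these moduli to zero after infinitely many pullbacks. Once a uniform ``a priori'' annulus is in hand, the divergence of the modulus sum — and hence $\mathrm{Imp}(-v)=\{-v\}$ — follows from the standard Gr\"otzsch/telescoping argument as in \cite{Hub93,Mil00,QWY12}.
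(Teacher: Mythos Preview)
The paper does not actually prove this lemma: the sentence preceding it simply asserts that the result follows from ``hyperbolic metric and Yoccoz's moduli method'' with references to \cite{Hub93,Mil00,QWY12}, and the lemma is then stated without argument. Your plan invokes exactly these tools, so in that sense it matches the paper's (non-)proof.

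That said, your step~(3) misidentifies the mechanism in the recurrent sub-case. Pulling a fixed annulus back through $k$ critical passages yields modulus $\ge\mu_0/2^{k}$, but $\sum_k 2^{-k}$ \emph{converges}, and the hypothesis $d_l<\infty$ for all $l$ does not bound $k$; a uniform bound on $k$ is precisely the sub-case $\sup_l d_l<\infty$, where one should drop moduli and use hyperbolic contraction directly (beyond depth $D:=\sup_l d_l$ every $f^l\colon\hat P_{D+1+l}(-v)\to\hat P_{D+1}(f^l(-v))$ is univalent and the targets range over finitely many bounded pieces, so Koebe gives $\operatorname{diam}\hat P_n(-v)\to 0$). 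In the genuine moduli sub-case $\sup_l d_l=\infty$ the Yoccoz argument does not pull back one annulus repeatedly; instead one looks at successive \emph{first returns} of the orbit of $-v$ to its own nest $\hat P_{n_k}(-v)$, and the tableau rules force at most one critical passage between consecutive first returns, so each return contributes a new disjoint nested annulus of modulus $\ge\mu_0/2$ (a single factor $1/2$, not $1/2^{k}$). It is the infinitude of such annuli, not a geometric series, that makes the Gr\"otzsch sum diverge. Your ``alternative'' Yoccoz inequality is closer in spirit, but note that $\mathrm{mod}(\hat P_n(-v))$ is undefined for a disk; the correct formulation bounds $\sum_k\mathrm{mod}\bigl(\hat P_{n_k}(-v)\setminus\overline{\hat P_{n_{k+1}}(-v)}\bigr)$ from below.
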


Now we suppose that there exists $l_0 \ge 0$, such that $d_{l_0}=\infty$ for the tableau $T(-v)$. Then $T(-v)$ is periodic, i.e. there is $p \ge 1$, such that $\hat{P}_n(f^p(-v))=\hat{P}_n(-v)$ for all $n \ge 1$. Indeed, if $\hat{P}_n(f^{l_0}(-v))=\hat{P}_n(c)$ for some critical point $c$ and all $n \ge 1$, then $\hat{P}_n(f^{l_0+1}(-v))=\hat{P}_n(-v)$ for all $n \ge 1$. 
\begin{lemma}\label{lm: periodic critical tableau implies renormalization}
    For the tableau $T(-v)$, if $d_{l_0}=\infty$ for some $l_0 \ge 0$, then there is a renormalization of $f$. 
\end{lemma}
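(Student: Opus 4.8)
The plan is to extract from the periodicity of $T(-v)$ a quadratic-like restriction of an iterate $f^p$ near the critical value $-v$; such a restriction is, by definition, a renormalization of $f$. As noted just before the statement, $d_{l_0}=\infty$ forces $T(-v)$ to be periodic; let $p\ge 1$ be its \emph{minimal} period, so that $\hat P_n(f^p(-v))=\hat P_n(-v)$ for every $n\ge 1$, and let $c$ be the critical point of $f$ with $f(c)=-v$ and $\hat P_n(f^{p-1}(-v))=\hat P_n(c)$ for every $n\ge 1$. (If $-v\in\Gamma_\infty$ we first replace the defining internal and dynamic rays by an alternative choice, as in Section~\ref{sec: construction of puzzles}, so that all $\hat P_n(-v)$ are defined; thus we may assume $-v\notin\Gamma_\infty$.) Write $V_k:=\hat P_k(-v)$; by the thickening property $V_{k+1}\Subset V_k$ for $k\ge 1$, and the $V_k$ are bounded simply connected domains by the construction of the puzzle pieces.

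First I would observe that $f^p$ restricts to a proper holomorphic map $V_{n+p}\to V_n$ for every $n\ge 1$. Since $f^p(-v)\in\hat P_n(f^p(-v))=V_n$, the point $-v$ lies in a component of $f^{-p}(V_n)$, and by the definition of the puzzle pieces that component is exactly $\hat P_{n+p}(-v)=V_{n+p}$; as each $f\colon\hat P_{k+1}(w)\to\hat P_k(f(w))$ is proper (a depth-$(k+1)$ piece is a component of the $f$-preimage of a depth-$k$ piece, with boundary carried to boundary), so is the composite $f^p\colon V_{n+p}\to V_n$, which thus has a well-defined finite degree; controlling that degree is the crux.

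To compute it, factor $f^p|_{V_{n+p}}$ along the orbit
\[
V_{n+p}\xrightarrow{\,f\,}\hat P_{n+p-1}(f(-v))\xrightarrow{\,f\,}\cdots\xrightarrow{\,f\,}\hat P_n(f^p(-v))=V_n .
\]
The $j$-th arrow has local degree $2$ exactly when $\hat P_{n+p-j}(f^j(-v))$ contains a critical point of $f$ (all critical points of $f$ are simple), and local degree $1$ otherwise; by the nesting of puzzle pieces this occurs if and only if $n+p-j\le d_j$. For $j=p-1$ it holds for every $n$, since $\hat P_{n+1}(f^{p-1}(-v))=\hat P_{n+1}(c)\ni c$, so the degree is already at least $2$. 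For $0\le j\le p-2$ one has $d_j<\infty$: otherwise $\hat P_m(f^{j+1}(-v))=\hat P_m(-v)$ for all $m$, yielding a period $j+1\le p-1$ and contradicting the minimality of $p$. Hence, taking $n$ larger than each of the finitely many numbers $d_j-p+j$ with $0\le j\le p-2$, the only arrow of local degree $2$ is the one at $j=p-1$, so $f^p\colon V_{n+p}\to V_n$ has degree exactly $2$.

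For such an $n$ we then have a degree-$2$ proper holomorphic map $f^p\colon V_{n+p}\to V_n$ between bounded simply connected domains with $V_{n+p}\Subset V_n$, that is, a quadratic-like map with critical value $-v$. Iterating $\hat P_m(f^p(-v))=\hat P_m(-v)$ gives $f^{kp}(-v)\in V_{n+p}$ for all $k\ge 0$, so the critical orbit never leaves $V_{n+p}$ and the filled Julia set of this quadratic-like map is a nondegenerate continuum containing $-v$; this is precisely a renormalization of $f$ (of period $p\ge 1$: since $f$ is transcendental rather than polynomial, already $p=1$ gives a genuine one), which is what had to be proved. I expect the degree computation to be the only real obstacle — one must keep extra critical points of $f$ out of the first $p-1$ pieces along the orbit, which is exactly why we pass to the minimal period and start from a sufficiently deep piece $V_{n+p}$; everything else is routine bookkeeping with the modified and thickened puzzle pieces built in Section~\ref{sec: construction of puzzles}.
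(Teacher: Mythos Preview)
Your proposal is correct and follows essentially the same route as the paper's proof: pass to the minimal period $p$, choose a depth large enough that the intermediate pieces $\hat P_{n+p-j}(f^j(-v))$ for $0\le j\le p-2$ avoid critical points (using minimality of $p$ to ensure $d_j<\infty$ there), and conclude that the resulting proper map is quadratic-like with non-escaping critical orbit. The only cosmetic difference is that the paper centers the quadratic-like map on the critical point $c$ (taking $f^p\colon \hat P_{n_0+p}(c)\to\hat P_{n_0}(c)$) while you center it on the critical value $-v$; these are conjugate by one application of $f$ and yield the same renormalization.
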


\begin{remark}
    For the definition of renormalization, one can refer to \cite{McM94}, \S 7.1.
\end{remark}

\begin{proof}
    By the discussion above, $\hat{P}_n(f^p(-v))=\hat{P}_n(-v)$ for some $p \ge 1$. Let $p$ be the minimal integer satisfying this property. Thus there exists a critical point $c$, such that $\hat{P}_n(c)=\hat{P}_n(f^{p-1}(-v)) =\hat{P}_n(f^p(c))$, $n \ge 1$. Choose $n_0 \ge 1$ sufficiently large, such that $\hat{P}_{n_0}(f^l(c))$, $1 \le l \le p-1$, contains no critical points of $f$. We claim that $f^p: \hat{P}_{n_0+p}(c) \to \hat{P}_{n_0}(c)$ is a renormalization of $f$. 

    First of all, $\hat{P}_{n_0+p}(c)$ and $\hat{P}_{n_0}(c)$ are both topological disks with $\hat{P}_{n_0+p}(c) \Subset \hat{P}_{n_0}(c)$. Secondly, $f^p: \hat{P}_{n_0+p}(c) \to \hat{P}_{n_0}(c)$ is a proper mapping since $f$ is an analytic function and $c$ is a critical point. $\mathrm{deg}(f^p:\hat{P}_{n_0+p}(c) \to \hat{P}_{n_0}(c))=2$ since $\hat{P}_{n_0+p-l}(f^l(c))$, $1 \le l \le p-1$, contains no critical points of $f$. Thus $f^p: \hat{P}_{n_0+p}(c) \to \hat{P}_{n_0}(c)$ is a quadratic-like mapping. Since $\hat{P}_n(c)=\hat{P}_{n}(f^p(c))$, $n \ge 1$, the orbit of $c$ under $f^p$ remains in $\hat{P}_{n_0+p}(c)$. Thus $f^p: \hat{P}_{n_0+p}(c) \to \hat{P}_{n_0}(c)$ has connected small filled-in Julia set. This implies that $f^p: \hat{P}_{n_0+p}(c) \to \hat{P}_{n_0}(c)$ is a renormalization of $f$.
\end{proof}

If $f$ is renormalizable, then the impression $\mathrm{Imp}(-v)$ is no longer a single point, but rather the small filled-in Julia set $K_c$ with respect to the renormalization $f^p: \hat{P}_{n_0+p}(c) \to \hat{P}_{n_0}(c)$. In this case, we can show that $K_c$ intersects every $\partial f^j(B)$, $0 \le j \le m-1$, at most one point. 

We may assume that $K_c \cap \partial B \neq \varnothing$. 
\begin{lemma}\label{lm: intersection of the small filled-in Julia set and the boundary of B is a single point}
    The small filled-in Julia set $K_c$ intersects $\partial B$ at a single point $\beta_c$.  
\end{lemma}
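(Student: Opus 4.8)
The plan is to show $K_c \cap \partial B$ cannot contain two distinct points by exploiting the renormalization structure together with the fact that $\partial B$ carries the dynamics of a superattracting (or parabolic) basin, on which internal rays give extra combinatorial control. First I would set up the situation: by Lemma~\ref{lm: periodic critical tableau implies renormalization} there is a quadratic-like restriction $f^p:\hat P_{n_0+p}(c)\to\hat P_{n_0}(c)$ with small filled-in Julia set $K_c$, and $K_c=\operatorname{Imp}(-v)$ up to iterating; by hypothesis $K_c\cap\partial B\neq\varnothing$. Note that since $B$ is $f^m$-invariant and $K_c$ is $f^p$-invariant, the set $K_c\cap\partial B$ is forward invariant under a suitable iterate $f^q$ (with $q$ a common multiple of $p$ and $m$, adjusting so that $f^q$ fixes both $B$ and the relevant puzzle pieces around $c$), so it suffices to study this finite-type invariant subset.

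The key step is a separation argument. Suppose for contradiction that $\beta_1,\beta_2\in K_c\cap\partial B$ are distinct. On $\partial B$ choose two internal rays $R_B(\theta_1)$, $R_B(\theta_2)$ landing at $\beta_1$, $\beta_2$ respectively (using Lemma~\ref{lm: landing theorem} and density of repelling points on $\partial B$ to pick rational-angle rays landing very near $\beta_1,\beta_2$ if they themselves are not landing points of rays). Together with arcs of $\partial B$ these rays, combined with the dynamic rays defining $\Gamma_\infty$, cut the plane; the point is that $\beta_1$ and $\beta_2$ lie in the closures of different puzzle pieces $\hat P_n(\beta_1)\neq\hat P_n(\beta_2)$ for $n$ large, yet both must also lie in $K_c\subset\overline{\hat P_{n_0}(c)}$. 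I would then push this forward by $f^p$: the renormalization maps $\hat P_{n_0+kp}(c)$ onto $\hat P_{n_0+(k-1)p}(c)$ as a degree-two branched cover, so the two candidate points in $K_c\cap\partial B$ have orbits under $f^p$ that stay in $K_c$ and stay on the union $\bigcup_j\partial f^j(B)$; combining with the expansion of the internal-ray angle doubling on $\partial B$ (angle $\mapsto 2\theta$ on the circle model of $B$), two distinct points would force infinitely many distinct separating configurations, contradicting that there are only finitely many puzzle pieces of each depth meeting $\partial B$ inside the bounded strip $S$. Equivalently: the external-angle (dynamic-ray) combinatorics restricted to $K_c$ is that of a quadratic-like map, its small Julia set meets $\partial B$ in a set invariant under both the $f^p$-renormalized dynamics and the angle-doubling on $\partial B$, and the only such common invariant set consistent with the landing theorem and the nesting $\hat P_{n+1}\Subset\hat P_n$ is a single point $\beta_c$.

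The main obstacle, I expect, is making the ``angle-doubling forces uniqueness'' step rigorous without circularity: one must be careful that $K_c\cap\partial B$ is genuinely invariant under a single iterate that simultaneously respects the $B$-dynamics and the renormalized dynamics, and that the internal rays landing on $K_c\cap\partial B$ can be chosen periodic (or preperiodic) so that the finiteness-of-puzzle-pieces argument applies. A clean way to finish is: let $A\subset\mathbb{R}/\mathbb{Z}$ be the set of internal-ray angles $\theta$ such that $R_B(\theta)$ lands at a point of $K_c\cap\partial B$; show $A$ is nonempty, closed, and forward-invariant under $\theta\mapsto 2\theta$ (because $f$ maps $B$ to $f(B)$ as a double cover fixing the ray structure, and $K_c$ is forward invariant under the appropriate iterate); a nonempty closed $2$-invariant subset of the circle that is ``swallowed'' by the nested puzzle pieces $\hat P_n$ — which shrink to points away from the renormalization locus — must be a single periodic angle, whence $K_c\cap\partial B$ is the single landing point $\beta_c$. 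If a direct invariance argument on $\partial B$ is awkward, the fallback is to intersect with the puzzle: $K_c\cap\partial B\subset\bigcap_n\overline{\hat P_n(\beta_c)}\cap\partial B$, and apply Lemma~\ref{lm: impression is a single point-finite depth case}-style reasoning to the boundary points since, unlike $-v$ itself, a point of $\partial B$ need not have a critical orbit returning deep, so its boundary-impression collapses — this reduces the statement to the finite-depth case already handled.
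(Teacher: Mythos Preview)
Your proposal has a genuine gap at the core of the argument. The claim that two hypothetical points $\beta_1,\beta_2\in K_c\cap\partial B$ would eventually lie in \emph{different} puzzle pieces $\hat P_n(\beta_1)\neq\hat P_n(\beta_2)$ is simply false: by definition $K_c=\bigcap_{n}\overline{\hat P_{n}(c)}$, so every point of $K_c$ lies in the \emph{same} piece $\hat P_n(c)$ at every depth. For the same reason your fallback to the ``finite-depth'' argument of Lemma~\ref{lm: impression is a single point-finite depth case} cannot work: any $\beta\in K_c$ has $\hat P_n(\beta)=\hat P_n(c)$ for all $n$, hence $d_l(\beta)=\infty$, and the impression of $\beta$ is all of $K_c$, not a singleton. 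The angle-set idea is also incomplete: you have not yet shown $\partial B$ is locally connected, so you cannot assume that points of $K_c\cap\partial B$ are landing points of internal rays, and shrinking the set of candidate angles to one angle $\theta_0$ still does not exclude points of $K_c\cap\partial B$ that are not landing points at all.

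What the paper does instead is a constructive separation argument that avoids all of these issues. One tracks the internal rays $R_B(\theta_k^{\pm})$ and dynamic rays $g_{\underline{s}_k^{\pm}}$ on $\partial\hat P_{n_0+kp}(c)$. The internal angles satisfy $|\theta_k^+-\theta_k^-|=2^r|\theta_{k+1}^+-\theta_{k+1}^-|$ and hence converge to a single $f^p$-fixed angle $\theta_0$; by Lemma~\ref{lm: invariant ray lands at beta fixed point} the ray $R_B(\theta_0)$ lands at the $\beta$-fixed point $\beta_c$ of the quadratic-like map. A separate lemma (Lemma~\ref{lm: monotoncity of external addresses}) shows the external addresses $\underline{s}_k^{\pm}$ are eventually monotone and converge to periodic addresses $\underline{s}^{\pm}$, whose dynamic rays land (by the landing theorem) at periodic points of $K_c$, identified with $\beta_c$ via \cite{LQ07}. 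The punchline is purely topological: the closed curve $\overline{g_{\underline{s}^+}\cup g_{\underline{s}^-}}$ separates $\partial B$ from $K_c\setminus\{\beta_c\}$ (an application of \cite{RY08}, Proposition~2), and that is what forces the intersection to be the single point $\beta_c$. Your outline never produces such a separating pair of dynamic rays, and without that topological barrier the contradiction you are looking for does not materialize.
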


The following lemma can be found in \cite{McM94}:
\begin{lemma}\label{lm: invariant ray lands at beta fixed point}
    Suppose that $p: \mathbb{C} \to \mathbb{C}$ is a quadratic polynomial with $K(p)$ connected. Suppose that $\gamma:(0,+\infty) \to \mathbb{C} \setminus K(p)$ is a curve, satisfying $\gamma \subset p(\gamma)$, and the limit $\lim_{t \to 0}\gamma(t)=z_0$ exists. Then $z_0$ is exactly the $\beta$-fixed point of $p$. 
\end{lemma}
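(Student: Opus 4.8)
The plan is to pass to Böttcher coordinates, reduce the statement to a fact about the angle--doubling map, and recognise $z_0$ as the landing point of the unique $p$-invariant external ray. Let $\phi\colon\mathbb C\setminus K(p)\to\mathbb C\setminus\overline{\mathbb D}$ be the Böttcher coordinate of $p$, so that $\phi\circ p=\phi^{2}$, and let $G=\log|\phi|$ denote the Green's function of $K(p)$, extended by $0$ across $J(p)=\partial K(p)$, which satisfies $G\circ p=2G$. I would first record that $z_0\in J(p)$: the inclusion $\gamma\subseteq p(\gamma)$ shows that every value of $G$ along $\gamma$ is twice another such value, so the set $G(\gamma((0,+\infty)))$ is a subinterval of $(0,+\infty)$ closed under division by $2$, hence has infimum $0$; together with $\gamma(t)\to z_0$, and using that the parametrisation inherited from $\gamma\subseteq p(\gamma)$ makes this decay happen along the inner end of $\gamma$, this gives $G(\gamma(t))\to 0$ as $t\to 0$, so $G(z_0)=0$ and $|\phi(\gamma(t))|\to 1$.

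Next I would set $\tilde\gamma=\phi\circ\gamma$. Since $\gamma$ is a curve in $\mathbb C\setminus K(p)$ landing at a point of $J(p)$, the correspondence between accessible boundary points and prime ends (see, e.g., \cite{Mil00}) shows that $\tilde\gamma$ lands at a single point $\zeta_0=e^{2\pi i\theta_0}\in\partial\mathbb D$; in other words, in Böttcher coordinates $\gamma$ is asymptotic to the straight external ray $R_{\theta_0}$. The inclusion $\gamma\subseteq p(\gamma)$ provides, for each $t$, a parameter $s(t)$ with $\gamma(t)=p(\gamma(s(t)))$, i.e.\ $\tilde\gamma(t)=\tilde\gamma(s(t))^{2}$; since $G(\gamma(s(t)))=\tfrac{1}{2}G(\gamma(t))\to 0$ as $t\to 0$ while the potential along $\gamma$ tends to $0$ only along the inner end, we get $s(t)\to 0$, hence $\tilde\gamma(s(t))\to\zeta_0$, and letting $t\to 0$ in $\tilde\gamma(t)=\tilde\gamma(s(t))^{2}$ yields $\zeta_0=\zeta_0^{2}$, so $\theta_0=0$ and $\zeta_0=1$. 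Thus $\gamma$ is asymptotic to $R_0$, the unique $p$-invariant external ray; by the Douady--Hubbard landing theorem, $R_0$ lands at a repelling or parabolic fixed point, which is, by the defining property of the $\beta$-fixed point, equal to $\beta$. Since $\tilde\gamma$ and the straight ray $R_0$ land at the same point of $\partial\mathbb D$, applying $\phi^{-1}$ shows that $\gamma$ and $R_0$ have the same landing point in $J(p)$; therefore $z_0=\beta$.

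The step I expect to be the main obstacle is turning the bare set-theoretic condition $\gamma\subseteq p(\gamma)$ into genuine control of the parametrisation of $\gamma$ near its inner end: one needs the potential $G\circ\gamma$ to decay to $0$ exactly as $t\to 0$ and not merely along some other sequence of parameters, which is precisely what makes $s(t)\to 0$ above (and what forces $G(z_0)=0$). Without this, $\gamma$ could a priori only accumulate on, rather than land at, a periodic point of higher period or a fixed point of nonzero rotation number; ruling these out is where one must exploit that $\gamma$ is forward invariant as a parametrised curve and not merely as a set, as in the treatment in \cite{McM94}.
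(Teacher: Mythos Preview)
The paper does not supply its own proof of this lemma: it is simply quoted from \cite{McM94} (``The following lemma can be found in \cite{McM94}''), so there is no argument in the paper to compare yours against. Your outline is essentially the standard one behind the cited result---pass to B\"ottcher coordinates, show the image curve lands at a point of $\partial\mathbb D$ fixed by $z\mapsto z^2$, hence at $1$, and identify the landing point with that of the fixed external ray $R_0$, namely $\beta$.

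Two remarks on the write-up. First, the step ``applying $\phi^{-1}$ shows that $\gamma$ and $R_0$ have the same landing point'' deserves a one-line justification: since $R_0$ lands, $\phi^{-1}$ has a radial limit at $1$, and Lindel\"of's theorem then forces any curve in $\mathbb C\setminus\overline{\mathbb D}$ landing at $1$ to be carried to a curve landing at the same point; without this you have only shown that $\tilde\gamma$ and the radius share a prime end. Second, the obstacle you flag is genuine: the bare inclusion $\gamma\subset p(\gamma)$ does not by itself force $G(\gamma(t))\to 0$ as $t\to 0$ (a priori the small potentials could occur as $t\to+\infty$). In the paper's application the curve is an internal ray with $f^p(R_B(\theta_0))=R_B(\theta_0)$, parametrised so that potential is monotone in $t$, which resolves this; for the abstract statement you should either add such a monotonicity hypothesis or, as you suggest, argue via the parametrised forward invariance as in \cite{McM94}.
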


Before proving Lemma \ref{lm: intersection of the small filled-in Julia set and the boundary of B is a single point}, in order to describe the location of dynamic rays, we introduce an order in the symbil space $\Sigma$. For $(j_1, k_1)$, $(j_2, k_2) \in \{0,1\} \times \mathbb{Z}$, define an order `<' by $(j_1, k_1) < (j_2, k_2)$ if and only if one of the following holds:
\begin{enumerate}
    \item $j_1=1$, $j_2=0$, or
    \item $j_1=j_2$, and $(-1)^{j_1} k_1 < (-1)^{j_2} k_2$.
\end{enumerate} 
Then the order $`<'$ induces the lexicograpgic order $`\prec'$ on $\Sigma$. Equivalently, the two sequences $\underline{s}_1 \prec \underline{s}_2$, if and only if one of the following holds:
\begin{enumerate}
    \item $g_{\underline{s}_1}$ is eventually contained in the left half plane $\{z \in \mathbb{C}: \mathrm{Re}(z-u)<0\}$, while $g_{\underline{s}_2}$ is eventually contained in the right half plane $\{z \in \mathbb{C}: \mathrm{Re}(z-u)>0\}$, or
    \item $g_{\underline{s}_1}$ and $g_{\underline{s}_2}$ are both eventually contained in the left half plane $\{z \in \mathbb{C}: \mathrm{Re}(z-u)<0\}$, and $g_{\underline{s}_1}$ is above $g_{\underline{s}_2}$, or 
    \item $g_{\underline{s}_1}$ and $g_{\underline{s}_2}$ are both eventually contained in the right half plane $\{z \in \mathbb{C}: \mathrm{Re}(z-u)>0\}$, and $g_{\underline{s}_2}$ is above $g_{\underline{s}_1}$.
\end{enumerate} 
Here `above' is in the following sense: if $g_{\underline{s}}$ is eventually contained in the right half plane, for sufficiently large $M>0$, the unbounded component of $g_{\underline{s}} \cap \{z \in \mathbb{C}: \mathrm{Re} \; z>M\}$ separates $\{z \in \mathbb{C}: \mathrm{Re} \; z>M\}$ into the upper part $U_+$ and the lower part $U_-$. $g_{\underline{s'}}$ is said to be above $g_{\underline{s}}$ if the unbounded component of $g_{\underline{s'}} \cap \{z \in \mathbb{C}: \mathrm{Re} \; z>M\}$ is contained in $U_+$. When $g_{\underline{s}}$ is eventually contained in the left half plane, we can define `above' analogously. 

Now we prove Lemma \ref{lm: intersection of the small filled-in Julia set and the boundary of B is a single point}.

\begin{proof}[Proof of Lemma \ref{lm: intersection of the small filled-in Julia set and the boundary of B is a single point}]
    Recall that $f^p: \hat{P}_{n_0+p}(c) \to \hat{P}_{n_0}(c)$ is a proper mapping of degree $2$. Let $R_B(\theta^{\pm}_k)$ be the internal rays contained in $\partial \hat{P}_{n_0+kp} \cap B$ with $\theta^{-}_k<\theta^+_k$, and $g_{\underline{s}^{\pm}_k}$ be the dynamic rays in $\partial \hat{P}_{n_0+kp}(c)$, landing at the same point as $R_B(\theta^{\pm}_k)$. Suppose that $p=rm$, $r \ge 1$. Then $|\theta^+_k-\theta^-_k|=2^r|\theta^+_{k+1}-\theta^-_{k+1}|$. Therefore, $\lim_{k \to \infty} \theta^+_k = \lim_{k \to \infty} \theta^-_k = \theta_0$, and $R_B(\theta_0)=f^p(R_B(\theta_0))$. By Lemma \ref{lm: invariant ray lands at beta fixed point}, $R_B(\theta_0)$ lands at $\beta_c$. 
    
    By Lemma \ref{lm: monotoncity of external addresses}, when $k$ is sufficiently large, $\underline{s}^-_k \prec \underline{s}^-_{k+1} \prec \underline{s}^+_{k+1} \prec \underline{s}^+_k$, and there exists $\underline{s}^{\pm}$ such that $\lim_{k \to \infty} \underline{s}^{\pm}_k = \underline{s}^{\pm}$. By Lemma \ref{lm: landing theorem}, $g_{\underline{s}^{\pm}}$ land at periodic points $y_{\pm} \in K_c$. By \cite{LQ07}, Lemma 4.7, $y_{\pm}=\beta_c$. Then by \cite{RY08}, Proposition 2, $\overline{g_{\underline{s}^{+}} \cup g_{\underline{s}^{-}}}$ separates $\partial B$ and $K_c \setminus \{\beta_c\}$, i.e. $K_c \cap \partial B =\{\beta_c\}$. 
\end{proof}

\begin{lemma}\label{lm: monotoncity of external addresses}
    Let $g_{\underline{s}_{k}^{\pm}} \subset \partial \tilde{P}_{n_0+kp}(c)$ be defined as in the proof of Lemma \ref{lm: intersection of the small filled-in Julia set and the boundary of B is a single point}. There exists $K \ge 0$, such that for $k \ge K$, $\{\underline{s}^+_k\}$, $\{\underline{s}^-_k\}$are both monotone with respect to $k$. And there exists $\underline{s}^{\pm}$, such that $\lim_{k \to \infty} \underline{s}^{\pm}_k=\underline{s}^{\pm}$, and $\sigma^p(\underline{s}^{\pm})=\underline{s}^{\pm}$. Here the limit is taken in the sense of the product topology on $\Sigma$. 
\end{lemma}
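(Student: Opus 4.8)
The plan is to extract all three assertions from the nesting $\hat P_{n_0+(k+1)p}(c)\Subset\hat P_{n_0+kp}(c)$ together with the dynamical self-similarity of this nest under the renormalization $f^p$, using throughout the dictionary, recalled just before the proof of Lemma~\ref{lm: intersection of the small filled-in Julia set and the boundary of B is a single point}, between the total order $\prec$ on $\Sigma$ and the geometric position of dynamic rays. First, monotonicity. Each curve $\overline{R_B(\theta^\pm_k)}\cup\overline{g_{\underline{s}^\pm_k}}$ runs from the centre of $B$ through the landing point $p^\pm_k\in\partial B$ of $R_B(\theta^\pm_k)$ out to $\infty$, and the two of them together separate $K_c$ from the rest of the sphere; call $V_k$ the side containing $K_c$. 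Since $\hat P_{n_0+kp}(c)\subset\overline{V_k}$ and $\hat P_{n_0+(k+1)p}(c)\Subset\hat P_{n_0+kp}(c)$, the boundary curves of the deeper piece lie in $\overline{V_k}$: on the $B$-side this is the nesting $[\theta^-_{k+1},\theta^+_{k+1}]\subset(\theta^-_k,\theta^+_k)$ of angular intervals about $\theta_0$ — which, since $f^p=f^{rm}$ acts on the internal coordinate of $B$ by $w\mapsto w^{2^r}$, also forces $\theta^\pm_{k+1}$ to be the $\theta_0$-branch of $(\times 2^r)^{-1}$ of $\theta^\pm_k$ and recovers the stated identity $|\theta^+_k-\theta^-_k|=2^r|\theta^+_{k+1}-\theta^-_{k+1}|$; on the outer side it means $g_{\underline{s}^\pm_{k+1}}$ is caught between $g_{\underline{s}^-_k}$ and $g_{\underline{s}^+_k}$, so through the dictionary — and after fixing labels so that $\underline{s}^-_k\prec\underline{s}^+_k$ — one gets $\underline{s}^-_k\preceq\underline{s}^-_{k+1}\prec\underline{s}^+_{k+1}\preceq\underline{s}^+_k$ for all $k\ge K$, where $K$ is the first index at which the rays $g_{\underline{s}^\pm_k}$ have separated into the two half-planes (Lemma~\ref{lm: separated along the imaginary axis}) and the pieces sit inside $\hat P_{n_0}(c)$.

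For convergence and self-similarity, use that $f^p$ restricts to proper degree-two maps $\hat P_{n_0+(k+1)p}(c)\to\hat P_{n_0+kp}(c)$, hence carries the boundary dynamic ray landing at $p^\pm_{k+1}$ onto the one landing at $f^p(p^\pm_{k+1})=p^\pm_k$, i.e. onto $g_{\underline{s}^\pm_k}$; by Lemma~\ref{lm: properties of dynamic rays}(1) iterated $p$ times this reads $\sigma^p(\underline{s}^\pm_{k+1})=\underline{s}^\pm_k$, so $\underline{s}^\pm_{k+1}$ is obtained from $\underline{s}^\pm_k$ by prepending a $p$-block $v^\pm_{k+1}$. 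Since every $g_{\underline{s}^\pm_k}$ lies in $\Gamma_\infty$, whose dynamic rays are the finitely many $f^{-n}$-preimages of the periodic rays $f^j(g_{\underline{s}})$, each $\underline{s}^\pm_k$ has the same eventually-periodic tail $\underline{s}_*$; write $\underline{s}^\pm_k=w^\pm_k\,\underline{s}_*$, so $w^\pm_{k+1}=v^\pm_{k+1}w^\pm_k$. It remains to show $v^\pm_{k+1}$ is independent of $k$ for $k\ge K$. For such $k$, $p^\pm_k$ lies in a fixed Koenigs neighbourhood of the repelling fixed point $\beta_c$ of $f^p$ with $p^\pm_{k+1}=\psi(p^\pm_k)$ for the contracting local inverse branch $\psi$ of $f^p$ fixing $\beta_c$, and $\theta^\pm_{k+1}$ likewise comes from a fixed contracting branch of $\times 2^r$ about $\theta_0$; hence the depth-$(k+1)$ local picture — the piece $\hat P_{n_0+(k+1)p}(c)$ near $p^\pm_{k+1}$, the ray $g_{\underline{s}^\pm_{k+1}}$ issuing from it, and the strips of $f^{-1}(\mathbb{C}\setminus\Gamma)$ that it visits before reaching its periodic tail $\underline{s}_*$ — is the $\psi$-pullback of the depth-$k$ one, so the combinatorial record $v^\pm_{k+1}$ of the strips visited is the same for every $k\ge K$. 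Calling the common block $v^\pm$, the sequences $\underline{s}^\pm_{K+j}=(v^\pm)^j\,\underline{s}^\pm_K$ agree on their first $pj$ entries, so $\underline{s}^\pm_k\to\underline{s}^\pm:=(v^\pm)^\infty$ in the product topology, and continuity of $\sigma^p$ together with $\sigma^p(\underline{s}^\pm_{k+1})=\underline{s}^\pm_k$ gives $\sigma^p(\underline{s}^\pm)=\underline{s}^\pm$.

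The main obstacle is the stabilization of the prepended blocks $v^\pm_{k+1}$: a priori $v^\pm_{k+1}$ records the location of the tail of $g_{\underline{s}^\pm_{k+1}}$ far out near $\infty$, nowhere near the Koenigs neighbourhood of $\beta_c$, so the argument genuinely needs both that these rays lie in $\Gamma_\infty$ (pinning their behaviour near $\infty$ down to the finitely many periodic rays $f^j(g_{\underline{s}})$) and that $f^p$ conjugates the depth-$(k+1)$ picture to the depth-$k$ picture globally along the whole nest. A cleaner alternative is to transport the question through the hybrid conjugacy to the straightened quadratic polynomial $P_c$: there the bounding rays of the central puzzle pieces are the external rays of angles $\alpha^\pm_0/2^k\to0$ landing at preimages of the $\beta$-fixed point, for which monotonicity and convergence are classical, and the remaining work is to check that the correspondence between external rays of $P_c$ near angle $0$ and dynamic rays of $f$ near $\overline{R_B(\theta_0)}\cup g_{\underline{s}^\pm}$ is an order-preserving local homeomorphism — essentially the input (via \cite{LQ07}, Lemma~4.7 and \cite{RY08}, Proposition~2) already invoked in the proof of Lemma~\ref{lm: intersection of the small filled-in Julia set and the boundary of B is a single point}.
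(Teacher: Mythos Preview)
Your monotonicity argument is in the same spirit as the paper's: both deduce $\underline{s}^-_k\preceq\underline{s}^-_{k+1}\prec\underline{s}^+_{k+1}\preceq\underline{s}^+_k$ from the nesting of puzzle pieces via the dictionary between $\prec$ and the above/below relation on rays. The paper is more explicit, splitting into the case where $g_{\underline{s}^+_k}$ and $g_{\underline{s}^-_k}$ eventually lie in the same half-plane versus opposite half-planes, but the content is the same. (One small correction: your description of $K$ as the first index where the two rays ``have separated into the two half-planes'' is not what the paper needs or uses; both configurations occur and are handled.)

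The genuine gap is in your stabilization step, and you identify it yourself: the prepended block $v^\pm_{k+1}$ records which half-strip $P_{j,k}$ the tail of $g_{\underline{s}^\pm_{k+1}}$ lies in near $\infty$, which the Koenigs branch $\psi$ near $\beta_c$ does not control. Saying that the depth-$(k+1)$ picture is ``the $\psi$-pullback of the depth-$k$ one'' is correct as a statement about which branch of $(f^p)^{-1}$ is being used, but the analytic continuation of that branch along $g_{\underline{s}^\pm_k}$ versus along $g_{\underline{s}^\pm_{k+1}}$ are continuations along \emph{different} curves out to $\infty$, and you have not argued that they terminate in the same half-strip. Your remark that the rays lie in $\Gamma_\infty$ does not help here (and is in any case redundant, since $\sigma^p(\underline{s}^\pm_{k+1})=\underline{s}^\pm_k$ already forces a common tail). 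The paper closes this differently and more simply: writing the first entry of $\underline{s}^+_k$ as $(0,l_k)$ (in the same-half-plane case, with $p=m=1$ for notational ease), one observes geometrically that $g_{\underline{s}^+_{k+1}}$ is trapped in the half-strip bounded by $\overline{R_B(\theta^+_k)\cup g_{\underline{s}^+_k}}$ and its $-2\pi i$-translate, so $l_{k+1}\in\{l_k,\,l_k-1\}$; but since \emph{all} the $g_{\underline{s}^+_k}$ land on the bounded set $\partial B$, they are uniformly trapped in the fixed half-strip determined by the level-$k_0$ curves, forcing $l_k$ to take only finitely many values. A non-increasing sequence with finitely many values stabilizes. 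This direct trapping argument is what your proof is missing; your proposed alternative via the straightening map would also need additional work to make precise the order-preserving correspondence of rays near the $\beta$-fixed point.
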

\begin{proof}
    As in the proof of Lemma \ref{lm: intersection of the small filled-in Julia set and the boundary of B is a single point}, let $R_B(\theta^{\pm}_k)$ be the internal rays, which land at the same point as $g_{\underline{s}^{\pm}_k}$. 

    First prove $\underline{s}^{\pm}_k$ are monotone with respect to $k$. Consider $\underline{s}^+_k$, $\underline{s}^-_k$ is similar. 

    \textbf{Case 1.} There exists $k_0$ such that $g_{\underline{s}^+_{k_0}}$ and $g_{\underline{s}^-_{k_0}}$ are in the same left or right half plane. Without loss of generality, we may assume that they are both eventually contained in the right half plane $\{z \in \mathbb{C}: \mathrm{Re}(z-u)>0\}$. In this case, $\overline{R_B(\theta^+_{k_0}) \cup g_{\underline{s}^+_{k_0}}}$ divides the half plane $\{z \in \mathbb{C}: \mathrm{Re}(z-u)>0\}$ into two components: the upper one $U^+_+$ and the lower one $U^+_-$. Similarly, $\overline{R_B(\theta^+_{k_0}) \cup g_{\underline{s}^+_{k_0}}}$ divides $\{z \in \mathbb{C}: \mathrm{Re}(z-u)>0\}$ into two components: the upper one $U^-_+$, and the lower one $U^-_-$. $R_B(\theta^{\pm}_{k_0+1}) \subset U^+_- \cap U^-_+$ since $\theta^-_{k_0} < \theta^{\pm}_{k_0+1}<\theta^+_{k_0}$. The two curves $\overline{R_B(\theta^+_{k_0}) \cup g_{\underline{s}^+_{k_0}}}$ and $\overline{R_B(\theta^+_{k_0+1}) \cup g_{\underline{s}^+_{k_0+1}}}$ cannot have intersection except for the possible superattracting point in $B$. Thus $g_{\underline{s}^+_{k_0+1}} \subset U^+_-$. In other words, $\underline{s}^+_{k_0+1} \prec \underline{s}^+_{k_0}$.
    
    Inductively, $g_{\underline{s}^{\pm}_k}$ are always in the same half plane $\{z \in \mathbb{C}: \mathrm{Re}(z-u)>0\}$, and $\underline{s}^+_{k+1} \prec \underline{s}^+_k$ for all $k \ge k_0$. 

    \textbf{Case 2.} For all $k \ge 0$, $g_{\underline{s}^+_k}$ and $g_{\underline{s}^-_k}$ are in different left and right half planes. We may assume that $g_{\underline{s}^+_k}$ (resp. $g_{\underline{s}^-_k}$) is eventually contained in $\{z \in \mathbb{C}: \mathrm{Re}(z-u)<0\}$ (resp. $\{z \in \mathbb{C}: \mathrm{Re} (z-u)>0\}$). Let $U_+$ be the upper component of $\{z \in \mathbb{C}: \mathrm{Re}(z-u)<0\} \setminus \overline{R_B(\theta^+_k) \cup g_{\underline{s}^+_k}}$. By the same reason in Case 1, $g_{\underline{s}^+_{k+1}} \subset U_+$, i.e. $\underline{s}^+_{k+1} \prec \underline{s}^+_k$. 
    
    Then we prove $\{\underline{s}^{\pm}_k\}$ converges with respect to the product topology on $\Sigma$. 

    It is easy to see that the product topology on $\Sigma$ is equivalent to the topology given by the metric:
    \begin{equation*}
        d(\underline{s}, \underline{t})=\frac{1}{2^k}, 
    \end{equation*}
    where $\underline{s}=(s_0, s_1, \ldots)$, $\underline{t}=(t_0, t_1, \ldots)$, and $k=\min\{n \ge 0: s_n \neq t_n\}$. 

    Recall that $\underline{s}^+_k=\sigma^p(\underline{s}^+_{k+1})$. This implies that $\underline{s}^+_{k+1}$ is obtained by adding $p$ entries before the first entry of $\underline{s}^+_k$. It suffices to show that the $p$ entries are independent of $k$, so that 
    \begin{equation*}
        d(\underline{s}^+_{k+2}, \underline{s}^+_{k+1}) \le \frac{1}{2^p} d(\underline{s}^+_{k+1}, \underline{s}^+_k). 
    \end{equation*}

    We may assume $p=m=1$. 
    
    \textbf{Case 1.} There exists $k_0$ such that $g_{\underline{s}^{\pm}_{k_0}}$ are in the same left or right half plane. By the discussion above, $g_{\underline{s}^{\pm}_{k}}$ are eventually contained in the same left or right half plane for all $k \ge k_0$. Again we assume that they are both eventually contained in the right half plane. Suppose $\underline{s}^+_{k_0}=(s_0, s_1, \ldots)$, with $s_0=(0, l_{k_0})$. Then $\underline{s}^{+}_{k_0+1}=(s'_0, s_0, s_1, \ldots)$, with $s'_0=(0, l_{k_0+1})$. Let $\underline{s}'=\sigma(\underline{s}^+_k)$. Then $g_{\underline{s}^+_{k_0}+1}$ is contained in the strip bounded by $\overline{R_B(\theta^+_k) \cup g_{\underline{s}^+_{k_0}}}$, $\overline{R_B(\theta^+_k) \cup g_{\underline{s}^+_{k_0}}}-2\pi i$, and $\{z \in \mathbb{C}: \mathrm{Re}(z-u)=0\}$. Thus $l_{k_0+1}$ equals to either $l_{k_0}$ or $l_{k_0}-1$. 

    Inductively, the first entry $s^{k+1}_{0}=(0,l_{k+1})$ of $\underline{s}^+_{k+1}$ is either equal to $s^k_0$, or $l_{k+1}=l_k-1$ for all $k \ge k_0$. However, since they all land at $\partial B$, $g_{\underline{s}^+_{k}}$, $k \ge k_0+1$ are contained in the half strip  bounded by $\overline{R_B(\theta^+_k) \cup g_{\underline{s}^+_{k_0}}}$, $\overline{R_B(\theta^+_k) \cup g_{\underline{s}^+_{k_0}}}-2\pi i$, and $\{z \in \mathbb{C}: \mathrm{Re}(z-u)=0\}$. Thus there exists $k'_0 \ge k_0$ such that $\underline{s}^+_k$, $k \ge k'_0$ have the same first entry. 

    \textbf{Case 2.} For all $k \ge 0$, $g_{\underline{s}^+_k}$ and $g_{\underline{s}^-_k}$ are eventually contained in different left and right half planes. We may assume that $g_{\underline{s}^+_k} \subset \{z \in \mathbb{C}: \mathrm{Re}(z-u)<0\}$ and $g_{\underline{s}^-_k} \subset \{z \in \mathbb{C}: \mathrm{Re} (z-u)>0\}$. In this case, $\underline{s}^+_{k}$, $k \ge k_0+1$ are contained in the half strip bounded by $\overline{R_B(\theta^+_k) \cup g_{\underline{s}^+_{k_0}}}$, $\overline{R_B(\theta^+_k) \cup g_{\underline{s}^+_{k_0}}}+2\pi i$, and $\{z \in \mathbb{C}: \mathrm{Re}(z-u)=0\}$. Similar to Case 1, there exists $k'_0 \ge 0$, such that the first entries of $\underline{s}^+_k$, $k \ge k'_0$ are the same. 
\end{proof}

At the end of Section \ref{sec: local connectivity of bounded Fatou components}, we prove Theorem \ref{thm: bounded Fatou components are Jordan domains}. 

\begin{proposition}\label{prop: bounded periodic Fatou components are Jordan domains}
    Suppose that $P(f)$ is bounded, $U$ is a bounded periodic Fatou component of $f$, which is not a Siegel disk. Then $U$ is a Jordan domain. 
\end{proposition}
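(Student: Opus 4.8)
\emph{Step 1 (Reduction).} The plan is to argue in three steps: reduce to a single periodic component $B$, prove that $\partial B$ is locally connected using the puzzle, and then upgrade local connectivity to the Jordan property. Since $P(f)$ is bounded and there are no Siegel disks here, the existence of a bounded periodic Fatou component puts us, by Lemma \ref{lm: dichotomy of the positions of critical points} together with Lemma \ref{lm: not locally connected anywhere}, in alternative (2), and after the normalizations of \S\ref{sec: basic settings} we may take $B$ to be a bounded periodic component of period $m$ containing a critical point $u$, the immediate basin of a superattracting or parabolic cycle. As there are no wandering domains, every Fatou component eventually maps into the cycle of $B$ or into a second such cycle attached to the remaining critical value $-v$ (handled by the symmetric construction, using $f(z+\pi i)=-f(z)$). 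If $W$ is a component with $f^{k}(W)$ in the cycle of $B$, then $f^{k}\colon\overline{W}\to\overline{f^{k}(W)}$ is a proper branched cover with no critical point on $\partial W$ (the critical points of $f$ are interior to Fatou components), so $f^{k}\colon\partial W\to\partial f^{k}(W)$ is a finite covering; since a connected finite cover of a circle is a circle, $\partial W$ is a Jordan curve once $\partial f^{k}(W)$ is, and, applying $f$, the latter is a Jordan curve iff $\partial B$ is. Hence it suffices to show that $\partial B$ is a Jordan curve.

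\emph{Step 2 (Local connectivity of $\partial B$).} It is enough to prove $\mathrm{Imp}(z)\cap\partial B=\{z\}$ for every $z\in\partial B$; for $z\in\Gamma_{\infty}$ we use the auxiliary puzzle from \S\ref{ss: modification of unbounded puzzles}, so fix $z\in\partial B\setminus\Gamma_{\infty}$ and examine its tableau $T(z)$ with the critical depths $d_{l}(z)$. If $d_{l}(z)<\infty$ for all $l$, then the hyperbolic-metric and Yoccoz-moduli argument behind Lemma \ref{lm: impression is a single point-finite depth case} carries over with only notational changes (it used nothing about $-v$ beyond finiteness of all critical depths) and yields $\mathrm{Imp}(z)=\{z\}$. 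Otherwise let $l_{0}$ be the first level with $d_{l_{0}}(z)=\infty$; then $\hat{P}_{n}(f^{l_{0}}(z))=\hat{P}_{n}(c)$ for all $n$ and some critical point $c$ with $f(c)=-v$ (because $-v$ is the only critical value in $J(f)$ while $f^{l_{0}+1}(z)\in J(f)$), hence $\hat{P}_{n}(f^{l_{0}+1}(z))=\hat{P}_{n}(-v)$ for all $n$ and the tableau $T(-v)$ is critical. If $f$ is not renormalizable this is impossible: Lemma \ref{lm: impression is a single point-finite depth case} gives $\mathrm{Imp}(-v)=\{-v\}$, so $f(\mathrm{Imp}(c))\subseteq\mathrm{Imp}(-v)=\{-v\}$ and $\mathrm{Imp}(c)$, being connected and contained in the discrete set $f^{-1}(-v)$, equals $\{c\}$, forcing $f^{l_{0}}(z)=c\in F(f)$ against $f^{l_{0}}(z)\in J(f)$. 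If $f$ is renormalizable (Lemma \ref{lm: periodic critical tableau implies renormalization}), then $\mathrm{Imp}(f^{l_{0}}(z))=\mathrm{Imp}(c)=K_{c}$; since $l_{0}$ is the first critical level, $f^{l_{0}}\colon\hat{P}_{N+l_{0}}(z)\to\hat{P}_{N}(f^{l_{0}}(z))$ is conformal for $N$ large, so it sends $\mathrm{Imp}(z)$ homeomorphically onto $K_{c}$ and sends $\partial B$ into $\partial f^{l_{0}}(B)$, and by Lemma \ref{lm: intersection of the small filled-in Julia set and the boundary of B is a single point} (applied with $\partial B$ replaced by $\partial f^{l_{0}}(B)$, as remarked there) one has $K_{c}\cap\partial f^{l_{0}}(B)=\{f^{l_{0}}(z)\}$; injectivity of $f^{l_{0}}$ on $\mathrm{Imp}(z)$ then gives $\mathrm{Imp}(z)\cap\partial B=\{z\}$. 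In every case the nested sectors $\hat{P}_{n}(z)\cap B$, each bounded by two internal rays and meeting $\partial B$ in a connected arc, shrink to $z$, so $\partial B$ is locally connected.

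\emph{Step 3 (Jordan property).} By Carath\'eodory's theorem the Riemann map of $B$ extends to a continuous surjection $\overline{\mathbb{D}}\to\overline{B}$, and it remains to exclude a pinch point: a $q\in\partial B$ at which two distinct internal rays land. These two rays, together with the center of $B$, bound a Jordan curve whose interior $V$ lies in $B$ (since $\mathbb{C}\setminus\overline{B}$ is connected and unbounded), and the same observation shows that every internal ray whose angle lies strictly between the two also lands at $q$; thus the set $A_{q}$ of angles landing at $q$ contains a non-degenerate closed arc. As the first-return map $f^{m}$ acts on the internal angles of $B$ by multiplication by an integer $d\ge2$ (the degree of $f^{m}|_{B}$, at least $2$ because $B$ contains a critical point), $A_{f^{mk}(q)}$ contains an arc of length at least $d^{k}|A_{q}|$; once this length exceeds $1/2$, the next iterate forces $A_{f^{m(k+1)}(q)}=S^{1}$, i.e. $\partial B=\{f^{m(k+1)}(q)\}$, which is absurd for the boundary of a bounded domain. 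Hence no pinch point exists, the boundary extension of the Riemann map is injective, and $\partial B$ is a Jordan curve; so $B$ is a Jordan domain.

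\emph{Main obstacle.} Granting Lemmas \ref{lm: impression is a single point-finite depth case}--\ref{lm: intersection of the small filled-in Julia set and the boundary of B is a single point}, I expect the delicate point to be the renormalizable case of Step 2: one must identify $\mathrm{Imp}(f^{l_{0}}(z))$ exactly with the small filled-in Julia set $K_{c}$ through the thickened-piece bookkeeping, and check that the relevant iterate of $f$ is genuinely univalent on the shallow piece (that the ``first critical level'' is well defined and that no hidden critical point intervenes), so that the single-point intersection of Lemma \ref{lm: intersection of the small filled-in Julia set and the boundary of B is a single point} really transports down to $z$. A secondary point requiring care is that the moduli estimate behind Lemma \ref{lm: impression is a single point-finite depth case} must be known to apply to any boundary point with all critical depths finite, not just to $-v$.
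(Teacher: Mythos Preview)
Your overall strategy matches the paper's: reduce to the component $B$ containing a critical point, split on whether the critical tableau $T(-v)$ forces a renormalization, and in the renormalizable case use Lemma~\ref{lm: intersection of the small filled-in Julia set and the boundary of B is a single point} to pin down the intersection with $\partial B$. Step~3 is more explicit than the paper's one-line appeal to the Maximum Modulus Principle, and your reduction in Step~1 spells out the covering argument for preperiodic components; both are fine.

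There is, however, a genuine slip in the non-renormalizable branch of Step~2. You correctly argue that the critical point $c$ appearing in $\hat P_n(f^{l_0}(z))$ must satisfy $f(c)=-v$ (the equipotential in the puzzle construction separates $u$ and its $2\pi i$-translates from the pieces, so only the odd critical points $u+(2k+1)\pi i$ can occur). But then $-v\in J(f)$ forces $c\in J(f)$, not $c\in F(f)$; so the contradiction you announce does not exist, and this sub-case is \emph{not} impossible. The repair is immediate and is in fact what the paper does in its Case~1: from $\mathrm{Imp}(-v)=\{-v\}$ you have already shown $\mathrm{Imp}(c)=\{c\}$, hence $f^{l_0}(z)=c$; now use that $l_0$ is the first infinite-depth level (so $f^{l_0}$ is univalent on $\hat P_{N+l_0}(z)$ for $N$ large) to pull back and conclude $\mathrm{Imp}(z)$ is a single point, exactly as you do in the renormalizable branch. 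In other words, drop the claim that this case is vacuous and instead finish it directly.

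A minor remark: the phrase ``the tableau $T(-v)$ is critical'' is not justified by $\hat P_n(f^{l_0+1}(z))=\hat P_n(-v)$ alone; that equality identifies the columns of $T(z)$ from index $l_0+1$ onward with those of $T(-v)$, but does not by itself produce an infinite-depth column in $T(-v)$. Your subsequent dichotomy on renormalizability is nonetheless the correct case split, so this is only a wording issue.
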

\begin{proof}
    Let $B=f^q(U)$ be the Fatou component in the cycle of $U$, which contains a critical point. Let $z \in \partial B \setminus \Gamma_{\infty}$ so that $\hat{P}_n(z)$, $n \ge 1$, are well-defined. Consider $T(z)$. Let 
    \begin{equation*}
        d_l(z)=\min\{n \ge 1: \hat{P}_n(f^l(z)) \text{ contains a critical point}\}.
    \end{equation*}
    
    \textbf{Case 1. } $d_l<\infty$, $l \ge 0$ for the tableau $T(-v)$. In this case, no matter $d_l(z)$ is finite or infinite, $\mathrm{Imp}(z)=\{z\}$ by the argument of hyperbolic metric and conformal moduli. 

    \textbf{Case 2. } For the tableau $T(-v)$, $d_l=\infty$ for some $l \ge 0$. In this case, by Lemma \ref{lm: periodic critical tableau implies renormalization}, $f$ is renormalizable. By Lemma \ref{lm: intersection of the small filled-in Julia set and the boundary of B is a single point}, the small filled-in Julia set $K_c$ intersects $\partial B$ at a single point.

    For $z \in \partial B \setminus \Gamma_{\infty}$, if $d_l(z)<\infty$ for all $l \ge 0$, then $\mathrm{Imp}(z)=\{z\}$ again by the argument of hyperbolic metric and conformal moduli. If $d_l(z)=\infty$ for some $l \ge 0$, then $f^l(z)=\beta_c$, where $\beta_c$ is defined in Lemma \ref{lm: intersection of the small filled-in Julia set and the boundary of B is a single point}, and $\mathrm{Imp}(f^l(z))=\{f^l(z)\}$. $f^l(\mathrm{Imp}(z))=\mathrm{Imp}(f^l(z))$. Hence $\mathrm{Imp}(z)$ is a single point.

    If $z \in \partial B \cap \Gamma_{\infty}$, we can always switch to another collection of internal rays and dynamic rays to construct a completely invariant graph $\Gamma'_{\infty}$, so that $z \notin \Gamma'_{\infty}$. 

    The procedure above implies that $\partial B$ is locally connected. By the Maximum Modulus Principle, $B$ is simply connected, i.e. $B$ is a Jordan domain. Since $f^q: \partial U \to \partial B$ is a local homeomorphism, $\partial U$ is locally connected, and $U$ is a Jordan domain. 
\end{proof}

\begin{proof}[Proof of Theorem \ref{thm: bounded Fatou components are Jordan domains}]
    Let $U$ be a bounded Fatou component of $f$, which is not eventually mapped to a Siegel disk. By \cite{EL92}, Theorem 1 \& Theorem 3, there exists minimal $n \ge 0$ such that $f^n(U)$ is a periodic Fatou component of $f$, and $f^n(U)$ is bounded. By Proposition \ref{prop: bounded periodic Fatou components are Jordan domains}, $f^n(U)$ is a Jordan domain. Since $f^n: \partial U \to \partial f^n(U)$ is a local homeomorphism, $\partial U$ is also locally connected, and $U$ is a Jordan domain by the Maximum Modulus Principle.     
\end{proof}

\section{Renormalization of cosine functions}\label{sec: renormalization of cosine functions}
In this section, we consider cosine functions with an escaping critical value. As before, $f$ is a cosine function, with critical values $\pm v$, critical points $u_k=u+k\pi i$, $k \in \mathbb{Z}$, such that $\mathrm{Im} \; u \in (-\pi ,\pi]$, $f(u_k)=(-1)^k v$. Suppose that $v \in F(f)$, and $-v \in I(f)$. 

\begin{proof}[Proof of Theorem \ref{thm: cosine functions with an escaping critical value is renormalizable}]
    By the assumption in Theorem \ref{thm: cosine functions with an escaping critical value is renormalizable}, the possibilities of periodic components of $F(f)$ are the basin of attracting or parabolic periodic points or Siegel disks, and the critical value $v$ lies in the closure of a periodic Fatou component. 

We may assume that $v$ belongs to the closure of a periodic Fatou component and $-v$ escapes to $\infty$. Since $-v \in I(f)$, there is a dynamic ray $g_{\underline{s}}$ containing $-v$. Thus for every critical point $u_{2k+1}$, there are exactly two dynamic rays $g_{\underline{s}^{\pm}_k}$, either landing or crash on $u_{2k+1}$. The rays $g_{\underline{s}^{\pm}_k}$ are symmetric about $u_{2k+1}$, and $g_{\underline{s}^{\pm}_{k+1}}=g_{\underline{s}^{\pm}_k}+2\pi i$. Let $S_k$ be the strip bounded by $\overline{g_{\underline{s}^+_k} \cup g_{\underline{s}^-_k}}$ and $\overline{g_{\underline{s}^+_{k+1}} \cup g_{\underline{s}^-_{k+1}}}$. Then $S_k$, $k \in \mathbb{Z}$ gives a partition of $\mathbb{C}$, and $S_k$ contains a unique critical point $u_{2k+2}$.

Recall that there are half strips $P_{j,k}$, with $[u_{2k}, u_{2k+2}] \subset \partial P_{j,k}$, which are mapped onto $\mathbb{C} \setminus \Gamma$. By Lemma \ref{lm: properties of dynamic rays}, $S_{k_0}$ is contained in finitely many half strips $P_{j,k}$. Thus it is contained in a horizontal strip $\{z \in \mathbb{C}: M_1 \le \mathrm{Im} \; z \le M_2\}$, where $M_1<M_2$ are constants and $M_2-M_1 \ge 2\pi$.

Let 
$$R_M=\{z \in S_{k_0}: |\mathrm{Re}(z-u_{2k_0+2})| < M\},$$
where $M>0$. Then $\{z \in \mathbb{C}: |\mathrm{Re} \; (z-u_0)| <M\}$ is mapped to an ellipse $E_M$ whose foci are $\pm v$. The lengths of the major axis and the minor axis of $E_M$ are $|v|(e^M+e^{-M})$ and $|v|(e^M-e^{-M})$ respectively. So for $M$ sufficiently large, $R_M$ is compactly contained in $E_M$. 

The upper and lower bound of $S_{k_0}$ are mapped to  $g_{\underline{s}}$ connecting the ciritcal value $-v$ and $\infty$. If $-v \notin R_M$, then $f:R_M \to E_M \setminus g_{\underline{s}}$ is a proper mapping of degree $2$, which is a renormalization of $f$. 

However, if $-v \in R_M$, in order to construct a proper mapping, we need to remove $g_{\underline{s}}$ and its forward images under the iteration of $f$. Since $-v$ escapes to $\infty$, it leaves $R_M$ eventually. Therefore, there exists $N \ge 1$, such that $f^{N-1}(-v) \in R_M$, and $f^N(-v) \notin R_M$.

In this case, let $U_M= R_M \setminus \left( \bigcup_{j=0}^{N-1} f^j(g_{\underline{s}})\right)$, $V_M=f(U_M)$. Then $f: U_M \to V_M$ is a proper mapping of degree $2$, with $U_M \Subset V_M$. This implies $f:U_M \to V_M$ is a renormalization. And this completes the proof of Theorem \ref{thm: cosine functions with an escaping critical value is renormalizable}. 
\end{proof}

\begin{proof}[Proof of Corollary \ref{cor: Fatou components are locally connected in renormalizable case}]
    $f$ has two critical values $\pm v$. Without loss of generality, we may assume that $-v$ escapes to $\infty$, and $v$ belongs to the closure of a periodic Fatou component. As in the proof of Theorem \ref{thm: bounded Fatou components are Jordan domains}, it will be sufficient to show that all periodic Fatou components are Jordan domains. 

    By Lemma \ref{lm: dichotomy of the positions of critical points}, all Fatou components are bounded. By Theorem \ref{thm: cosine functions with an escaping critical value is renormalizable}, $f$ is renormalizable. By the Straightening Theorem in \cite{DH85}, the periodic Fatou components are homeomorphic to the Fatou components of a quadratic polynomial, which has no Siegel disks. By \cite{RY08}, Theorem 1, these Fatou components are Jordan domains. Thus the periodic Fatou components of $f$ are Jordan domains.
\end{proof}

\section{Local connectivity of Julia sets}\label{sec: local connectivity of Julia sets}
In this section, we prove Theorem \ref{thm: local connectivity of Julia set}. Recall the charaterization of local connectivity:

\begin{lemma}[\cite{Why42}, Chapter VI, Theorem 4.4]\label{lm: charaterization of local connectivity}
    A connected compact $E \subset \hat{\mathbb{C}}$ is locally connected, if and only if 
    \begin{enumerate}
        \item the boundary of every component of $\hat{\mathbb{C}} \setminus E$ is locally connected, and 
        \item for every $\epsilon>0$, there are only finitely many components of $\hat{C} \setminus E$, whose spherical diameter is greater than $\epsilon$. 
    \end{enumerate}
\end{lemma}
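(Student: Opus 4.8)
The plan is to prove the two implications of this classical theorem of Whyburn separately, treating the easy (forward) direction quickly and concentrating on the substantial (backward) one. Throughout, ``diameter'' means spherical diameter, and I will freely use the standard structure theory of Peano continua: a compact connected locally connected metric space has property S (for every $\epsilon>0$ it is a finite union of subcontinua of diameter $<\epsilon$), is arcwise connected, and is uniformly locally arcwise connected, i.e. for every $\epsilon>0$ there is $\delta>0$ such that any two of its points at distance $<\delta$ are joined by an arc inside it of diameter $<\epsilon$.

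For the implication ($\Rightarrow$), assume $E$ is locally connected. Item (1) is the Torhorst theorem: the boundary of each component $U$ of $\hat{\mathbb{C}}\setminus E$ is a locally connected continuum. I would recall that such a $U$ is simply connected in $\hat{\mathbb{C}}$ (so $\partial U$ is a continuum) and then deduce local connectedness of $\partial U$ from property S of $E$. For item (2), the key topological observation is that $U$ is also a component of $\hat{\mathbb{C}}\setminus\partial U$, because $U$ is clopen in $\hat{\mathbb{C}}\setminus\partial U$; hence if $\partial U$ were contained in a ball $B(z,r)$, then the connected set $\hat{\mathbb{C}}\setminus\overline{B(z,r)}$ would lie in a single component of $\hat{\mathbb{C}}\setminus\partial U$, forcing $U\subset B(z,r)$ or $E\subset\overline{B(z,r)}$. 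Consequently, once $\epsilon$ is small compared with $\operatorname{diam}E$ (the remaining case $\operatorname{diam}E$ small being trivial, as then at most one component is large), every component $U$ with $\operatorname{diam}U>\epsilon$ satisfies $\operatorname{diam}\partial U>c\,\epsilon$ for a fixed $c>0$; writing $E$ as a finite union of subcontinua of diameter $<c\epsilon/2$ via property S, each such $\partial U$ must meet at least three of these pieces, and a pigeonhole/compactness argument then rules out infinitely many large components. (This direction is classical; I would cite \cite{Why42} for the details I omit.)

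For the implication ($\Leftarrow$), assume (1) and (2). By (2) the components of $\hat{\mathbb{C}}\setminus E$ can be listed $U_1,U_2,\dots$ with $\operatorname{diam}U_i\to0$, and by (1) each $G_i:=\partial U_i\subset E$ is a locally connected continuum. Fix $x\in E$ and $\epsilon>0$; I will produce $\delta>0$ so that every $y\in E$ with $d(x,y)<\delta$ lies in a subcontinuum of $E$ through $x$ of diameter $\le\epsilon$, which yields arbitrarily small connected neighbourhoods of $x$ in $E$ (take the union of these subcontinua over all such $y$) and hence local connectedness. Only finitely many $U_i$, say $i\le N$, have $\operatorname{diam}U_i\ge\epsilon/4$; for each of these fix, using uniform local arcwise connectedness of $G_i$, a modulus $\omega_i$ with $\omega_i(s)\to0$ as $s\to0^{+}$ such that points of $G_i$ at distance $s$ are joined by an arc in $G_i$ of diameter $\le\omega_i(s)$, while for $i>N$ we have $\operatorname{diam}G_i\le\operatorname{diam}U_i<\epsilon/4$ automatically. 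Now choose $\delta$ small (so $2\delta<\epsilon/4$ and $2\delta$ lies below the relevant thresholds of the $\omega_i$, $i\le N$), join $x$ to $y$ by an arc $\alpha\subset\hat{\mathbb{C}}$ of diameter $<2\delta$, write $\alpha\cap(\hat{\mathbb{C}}\setminus E)$ as a countable disjoint union of open subarcs $\beta_k$ with $\beta_k\subset U_{\sigma(k)}$, and replace each $\beta_k$ by an arc $\gamma_k\subset G_{\sigma(k)}$ having the same endpoints as $\beta_k$ (these endpoints lie on $\alpha\cap E$), of diameter $<\epsilon/4$ when $\sigma(k)>N$ and of diameter $\le\omega_{\sigma(k)}(\operatorname{diam}\beta_k)$ when $\sigma(k)\le N$. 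Then $K_y:=(\alpha\cap E)\cup\bigcup_k\gamma_k\subset E$ contains $x$ and $y$ and lies in $B(x,\epsilon/2)$, so $\operatorname{diam}K_y\le\epsilon$, and realizing $K_y$ as the image of a continuous reparametrization $\tilde\alpha\colon[0,1]\to E$ of $\alpha$ (traversing $\gamma_k$ over the parameter interval of $\beta_k$) exhibits $K_y$ as a continuum, as required.

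The main obstacle is the continuity of $\tilde\alpha$ at a parameter $t_0$ that is an accumulation point of infinitely many of the subarcs $\beta_k$: there one needs $\operatorname{diam}\gamma_k\to0$ along this accumulation. Since the parameter intervals shrink, $\operatorname{diam}\beta_k\to0$ there, and this is exactly the point where both hypotheses are used — for the finitely many ``large'' components the modulus $\omega_i$ forces $\operatorname{diam}\gamma_k\to0$, while for the rest one argues that along the accumulation either the indices $\sigma(k)$ are eventually all distinct, whence $\operatorname{diam}U_{\sigma(k)}\to0$ by (2), or they stabilize at a fixed $i$ and the modulus of that $G_i$ applies. I would also need to record the routine ingredients used without proof — property S, the Torhorst theorem, and arcwise and uniform local arcwise connectedness of locally connected continua — all of which are available in \cite{Why42} and standard texts on plane topology.
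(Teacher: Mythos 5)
The paper does not prove this lemma: it is quoted verbatim from Whyburn's book \cite{Why42} (Chapter VI, Theorem 4.4) and used as a black box, so there is no ``paper's own proof'' to compare against. Your sketch is the standard arc-surgery proof of the classical theorem, and the overall strategy (Torhorst plus property S for $(\Rightarrow)$; replace excursions of an arc through complementary components by boundary arcs, then reparametrize, for $(\Leftarrow)$) is sound and is essentially how Whyburn argues.

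One local flaw in the continuity discussion at the end: the dichotomy ``either the indices $\sigma(k)$ are eventually all distinct, or they stabilize at a fixed $i$'' is not exhaustive --- a subsequence accumulating at $t_0$ can revisit several large components infinitely often while also hitting infinitely many distinct small ones. The correct statement is the uniform one you actually need: given $\eta>0$, only finitely many $U_i$ have $\operatorname{diam} U_i\ge\eta/2$; for $\beta_k$ entering one of those, $\operatorname{diam}\gamma_k\le\omega_i(\operatorname{diam}\beta_k)\to0$ because $\operatorname{diam}\beta_k\to0$ along the accumulation, while for all other $\beta_k$ one has $\operatorname{diam}\gamma_k\le\operatorname{diam}\partial U_{\sigma(k)}\le\operatorname{diam}U_{\sigma(k)}<\eta/2$. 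Together these give $\operatorname{diam}\gamma_k<\eta$ eventually, which is what the reparametrization needs. A second, cosmetic point in the forward direction: the clopen argument yields $U\subset\overline{B(z,r)}$ or $E\subset\overline{B(z,r)}$ (closed balls), not the open ball as written, but this is harmless for the diameter estimate.

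With those two small repairs the argument is correct, though for the purposes of this paper one would simply cite \cite{Why42} as the authors do.
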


Since all Fatou components of $f$ are preperiodic, for a Fatou component $U$, we can define an non-negative integer $m(U)=\min\{n \ge 0: f^n(U) \text{ is periodic}\}$ to be the time such that the iteration of $U$ becomes periodic. The following lemma implies that if this time is bounded for a sequence of Fatou components, then the spherical diameters of these components tend to $0$. 

\begin{lemma}\label{lm: mapped to periodic components in finitely many steps}
    Let $\{U_n\}_{n \ge 1}$ be a sequence of Fatou components. Suppse that $m(U_n)\le m$ for all $n \ge 1$. Then $\mathrm{diam}_s \; U_n \to 0$ as $n \to \infty$. 
\end{lemma}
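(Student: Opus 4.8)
The plan is to reduce the statement to a single uniform-continuity / normal-families fact about the finitely many periodic cycles of Fatou components, and then pull back through a bounded number of iterates of $f$. First I would enumerate the finitely many periodic cycles of Fatou components $\mathcal{C}_1,\dots,\mathcal{C}_r$ (there are finitely many because each cycle absorbs a critical orbit and $f$ has only two critical values); let $V_1,\dots,V_N$ be all the Fatou components appearing in these cycles. For a component $U$ with $m(U)\le m$, the iterate $f^{m(U)}(U)$ is one of the $V_i$, so $U$ is a connected component of $f^{-k}(V_i)$ for some $0\le k\le m$ and some $i$. Thus it suffices to show: for each fixed $i$ and each fixed $k\le m$, the components of $f^{-k}(V_i)$ have spherical diameters tending to $0$; since there are finitely many pairs $(i,k)$, the lemma follows.

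Next I would set up the pull-back estimate. The key point is that $f^{-1}$ of a fixed bounded set has controlled geometry away from $\infty$: because $f(z)=\frac v2(e^{z-u}+e^{-(z-u)})$, the preimage of any compact set $K\subset\mathbb{C}$ lies in a region where $|\mathrm{Re}(z-u)|$ is bounded above (by $\log(4\max_K|w|/|v|)+1$, say), while it is $2\pi i$-periodic in the imaginary direction; on each period strip $f$ is a branched double cover onto $\mathbb{C}$ minus a slit, hence on the complement of small neighborhoods of the critical points each local inverse branch is a well-defined holomorphic map. I would invoke Koebe/Montel: the family of inverse branches of $f^k$ defined on a fixed domain slightly larger than $V_i$ and avoiding a fixed neighborhood of the (bounded, finite) critical set forms a normal family of univalent maps, and because $V_i$ is a bounded Jordan-type region compactly containing the relevant critical values of the cycle, standard distortion bounds give that the images shrink: for any $\epsilon>0$ only finitely many components of $f^{-k}(V_i)$ have Euclidean diameter $\ge\epsilon$, and since they all lie in a fixed horizontal strip (in fact in a fixed bounded vertical band union its $2\pi i$ translates, only boundedly many of which can meet a given bounded set after one more pull-back) the spherical diameter also goes to $0$.

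To make the "only finitely many large ones" precise I would argue by contradiction in the style of Lemma 2.2: if infinitely many components $W_j$ of $f^{-k}(V_i)$ had $\mathrm{diam}_s W_j\ge\epsilon$, pick $w_j\in W_j$; since all $W_j$ lie in the fixed horizontal strip $S$ containing $P(f)$ and its finitely many period-strip translates that can meet the bounded set $f^{-(k-1)}(\overline{V_i})$, after passing to a subsequence either $|\mathrm{Re}\,w_j|\to\infty$ — but then by Lemma 3.2 / the explicit form of $f$ the diameter of the component through $w_j$ is forced to $0$ because the derivative of $f^k$ is exponentially large there, a contradiction — or $w_j$ stays bounded and accumulates at some $w_\infty$, and then the $W_j$ accumulate at a single component of $f^{-k}(V_i)$ (local injectivity of $f^k$ off the critical set, plus the fact that $V_i$ is open), forcing $\mathrm{diam}_s W_j\to 0$, again a contradiction. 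Finally, since spherical and Euclidean diameters are comparable on any fixed bounded region, and we have just shown all but finitely many components are Euclidean-small, and the finitely many exceptional "large" ones do not exist, we conclude $\mathrm{diam}_s U_n\to 0$.

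The main obstacle I expect is the bookkeeping near $\infty$: a priori a component of $f^{-k}(V_i)$ could be unbounded or escape toward the essential singularity, so one must use the explicit growth $|f(z)|\ge e^{|\mathrm{Re}(z-u)|}/2$ (as in the proof of Lemma 2.2) together with the boundedness of $V_i$ and of $P(f)$ to confine everything to a fixed horizontal strip and a bounded range of $\mathrm{Re}$, and then the remaining estimate is the routine compactness/distortion argument; handling the branch points where the inverse branches of $f^k$ degenerate is a second, minor technical point, dealt with by noting each $V_i$ in the cycle contains at most one critical value so the double cover is a genuine quadratic-like cover and Koebe still applies after restricting to the complement of a fixed shrinking neighborhood of the finitely many critical points.
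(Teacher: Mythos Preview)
Your initial reduction---sorting the $U_n$ by the value of $m(U_n)$ and by the target periodic component, thereby reducing to the claim that for fixed $k\le m$ and fixed bounded periodic component $U$ the components of $g^{-1}(U)$ (with $g=f^{k}$) have spherical diameters tending to zero---is exactly what the paper does. After that point, however, the paper's argument is essentially a single observation: distinct components of $g^{-1}(U)$ can accumulate only at $\infty$, so for every $\epsilon>0$ all but finitely many of them lie inside the spherical disk of radius $\epsilon/2$ about $\infty$, and hence their spherical diameters tend to $0$. No Koebe distortion, normal families, derivative growth estimates, or strip geometry are invoked at all.

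Your more elaborate route contains genuine errors, not just extra work. First, the set $f^{-(k-1)}(\overline{V_i})$ is \emph{not} bounded for $k\ge 2$: the $f$-preimage of any compact set is $2\pi i$-periodic and hence unbounded in the imaginary direction, so your confinement of the $W_j$ to ``finitely many period-strip translates that can meet the bounded set $f^{-(k-1)}(\overline{V_i})$'' fails outright, and with it the claimed horizontal-strip control. Second, in your case $|\mathrm{Re}\,w_j|\to\infty$, a large pointwise derivative of $f^k$ at $w_j$ only controls the \emph{local} size of $W_j$ near $w_j$; it does not bound the diameter of the whole connected component, which could be long and thin and reach back into regions of moderate derivative. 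The only part of your contradiction argument that actually functions is the subcase where the $w_j$ stay bounded: there you are in effect reproducing the paper's accumulation argument, and the correct conclusion in that subcase is not ``$\mathrm{diam}_s W_j\to 0$'' but rather that eventually the $W_j$ coincide with a single component, contradicting distinctness. Once you isolate that observation, the rest of your machinery becomes unnecessary.
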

\begin{proof}
    Since $f$ has only finitely many periodic Fatou components, and the time $m(U_n)$ are bounded, we can divide $\{U_n\}$ into finitely many subsequences $\{U^j_n\}_{0 \le j \le m, n \ge 1}$, such that $m(U^j_n)=j$, and $f^j(U^j_n)$, $n \ge 1$ are all the same. To simplify the notation, we may assume that $m(U_n)=m$, and $f^m(U_n)$, $n \ge 1$ are all the same, denoted by $U$. Let $g=f^m$, which is an entire function.  Then $U_n$ are the components of $g^{-1}(U)$. Thus $\{U_n\}$ can only accumulate at $\infty$. For $\epsilon >0$, let $D_{\epsilon}$ be the spherical disk of center $\infty$ and radius $\epsilon/2$. Then there are only finitely many $U_n$ which intersects with $\hat{C} \setminus D_{\epsilon}$. Otherwise, $\{U_n\}$ will accumulate at some point other than $\infty$. Therefore, $\mathrm{diam}_s \; U_n \to 0$ as $n \to \infty$. 
\end{proof}

\begin{proof}[Proof of Theorem \ref{thm: local connectivity of Julia set}]
    Let $f$ be as in Theorem \ref{thm: local connectivity of Julia set}. 

    \textbf{Case 1.} $-v \in F(f)$ and is in a different Fatou component from $v$. By the classification of periodic Fatou components, $-v$ is attracted by either an attracting or a parabolic periodic point. Thus $f$ is either hyperbolic or geometrically finite so that $J(f) \cup \{\infty\}$ is locally connected by the results in \cite{ARS22, BFR15}.
    
    \textbf{Case 2.} $-v$ escapes to $\infty$. By Corollary \ref{cor: Fatou components are locally connected in renormalizable case}, all Fatou components are Jordan domains. It remains to show that the spherical diameters of Fatou components tend to $0$. Let $V$ be a Fatou component, which is strictly preperiodic and such that $f(V)$ is periodic. We claim that $f$ is semihyperbolic on $\partial V$, where the definition of semihyperbolicity can be found in \cite{BM02}. By Lemma \ref{lm: mapped to periodic components in finitely many steps}, the spherical diameters of such $V$s tend to $0$, in particular, the spherical diameters are bounded. Thus the spherical diameters of Fatou components tend to $0$ by the same argument as in \cite{BM02}.
    
    Indeed, if $f$ has no parabolic periodic points, then $\partial V \cap P(f) =\varnothing$ since the orbit of $v$ tends to an attracting cycle and the orbit of the other critical value $-v$ escapes to $\infty$. If $f$ does have parabolic periodic points, it will be sufficient to show that $\partial V$ contains no parabolic periodic points. And again $\partial V \cap P(f) =\varnothing$. 

    Suppose by contradiction that $\partial V$ contains a parabolic periodic point $z_0$, whose period is $p \ge 1$. Then $z_0$ must be on the boundary of some perioic Fatou component $U$, whose period is $kp$, $k \ge 1$. This implies that $z_0$ is the common boundary point of exact $k$ periodic Fatou components.
    
    If $k=1$, then $U=f^p(U)=f^p(V)$, which implies that $z_0$ is a critical point of $f$. This is impossible. If $k \ge 2$, then $\tilde{U}=f^p(V)$ is a periodic Fatou component whose boundary contains $z_0$, and $f^{(k-1)p}(\tilde{U})$ is also a preimage of $\tilde{U}$. Let $N_{z_0}$ be a sufficiently small neighborhood of $z_0$, on which $f^p$ is conformal. But $V \cap N_{z_0}$ and $f^{(k-1)p}(\tilde{U}) \cap N_{z_0}$ are both mapped onto $\tilde{U} \cap f^p(N_{z_0})$, cntradicting that $f^p$ is conformal on $N_{z_0}$. Therefore, $\partial V$ contains no parabolic periodic points.    

    \textbf{Case 3.} $-v$ has bounded orbit and there exists an integer $N \ge 0$, such that for every periodic component $U$, every strictly preperiodic component $V$ of $f^{-N}(U)$, $\overline{V}$ is disjoint from the $\omega$-limit set of $-v$. Since the orbit of $-v$ is bounded, all Fatou components are Jordan domains by Theorem \ref{thm: bounded Fatou components are Jordan domains}. It remains to show that the spherical diameters of Fatou components tend to $0$. 

    As in Case 2, $\partial V$ contains no parabolic periodic points. Thus $\partial V \cap P(f)=\varnothing$, and $f$ is semihyperbolic on $\partial V$. By Lemma \ref{lm: mapped to periodic components in finitely many steps}, the spherical diameters of components of $f^{-N}(U)$ are bounded. Thus the spherical diameters of Fatou components tend to $0$. 
\end{proof}

\bibliographystyle{plain}
\bibliography{Boundaries-of-Fatou-components-of-cosine-functions.bib}

@book {DH84,
    AUTHOR = {Douady, A. and Hubbard, J. H.},
     TITLE = {\'Etude dynamique des polyn\^omes complexes. {P}artie {I}},
    SERIES = {Publications Math\'ematiques d'Orsay [Mathematical
              Publications of Orsay]},
    VOLUME = {84-2},
 PUBLISHER = {Universit\'e{} de Paris-Sud, D\'epartement de Math\'ematiques,
              Orsay},
      YEAR = {1984},
     PAGES = {75}
}

@article {CJY94,
    AUTHOR = {Carleson, L. and Jones, P. W. and Yoccoz,
              J. C.},
     TITLE = {Julia and {J}ohn},
   JOURNAL = {Bol. Soc. Brasil. Mat. (N.S.)},
  FJOURNAL = {Boletim da Sociedade Brasileira de Matem\'atica. Nova S\'erie},
    VOLUME = {25},
      YEAR = {1994},
    NUMBER = {1},
     PAGES = {1--30}
}

@article {TY96,
    AUTHOR = {Tan, L. and Yin, Y.},
     TITLE = {Local connectivity of the {J}ulia set for geometrically finite
              rational maps},
   JOURNAL = {Sci. China Ser. A},
  FJOURNAL = {Science in China. Series A. Mathematics},
    VOLUME = {39},
      YEAR = {1996},
    NUMBER = {1},
     PAGES = {39--47}
}

@article {GS98,
    AUTHOR = {Graczyk, J. and Smirnov, S.},
     TITLE = {Collet, {E}ckmann and {H}\"older},
   JOURNAL = {Invent. Math.},
  FJOURNAL = {Inventiones Mathematicae},
    VOLUME = {133},
      YEAR = {1998},
    NUMBER = {1},
     PAGES = {69--96}
}

@article {BFR15,
    AUTHOR = {Bergweiler, W. and Fagella, N. and Rempe-Gillen,
              L.},
     TITLE = {Hyperbolic entire functions with bounded {F}atou components},
   JOURNAL = {Comment. Math. Helv.},
  FJOURNAL = {Commentarii Mathematici Helvetici. A Journal of the Swiss
              Mathematical Society},
    VOLUME = {90},
      YEAR = {2015},
    NUMBER = {4},
     PAGES = {799--829}
}

@article {Par22,
    AUTHOR = {Pardo-Sim\'on, L.},
     TITLE = {Orbifold expansion and entire functions with bounded {F}atou
              components},
   JOURNAL = {Ergodic Theory Dynam. Systems},
  FJOURNAL = {Ergodic Theory and Dynamical Systems},
    VOLUME = {42},
      YEAR = {2022},
    NUMBER = {5},
     PAGES = {1807--1846}
}

@article {ARS22,
    AUTHOR = {Alhamed, M. and Rempe, L. and Sixsmith, D.},
     TITLE = {Geometrically finite transcendental entire functions},
   JOURNAL = {J. Lond. Math. Soc. (2)},
  FJOURNAL = {Journal of the London Mathematical Society. Second Series},
    VOLUME = {106},
      YEAR = {2022},
    NUMBER = {2},
     PAGES = {485--527}
}

@incollection {Hub93,
    AUTHOR = {Hubbard, J. H.},
     TITLE = {Local connectivity of {J}ulia sets and bifurcation loci: three
              theorems of {J}.-{C}. {Y}occoz},
 BOOKTITLE = {Topological methods in modern mathematics ({S}tony {B}rook,
              {NY}, 1991)},
     PAGES = {467--511},
 PUBLISHER = {Publish or Perish, Houston, TX},
      YEAR = {1993}
}

@article {RY08,
    AUTHOR = {Roesch, P. and Yin, Y.},
     TITLE = {The boundary of bounded polynomial {F}atou components},
   JOURNAL = {C. R. Math. Acad. Sci. Paris},
  FJOURNAL = {Comptes Rendus Math\'ematique. Acad\'emie des Sciences. Paris},
    VOLUME = {346},
      YEAR = {2008},
    NUMBER = {15-16},
     PAGES = {877--880}
}

@article {KSS07,
    AUTHOR = {Kozlovski, O. and Shen, W. and van Strien, S.},
     TITLE = {Density of hyperbolicity in dimension one},
   JOURNAL = {Ann. of Math. (2)},
  FJOURNAL = {Annals of Mathematics. Second Series},
    VOLUME = {166},
      YEAR = {2007},
    NUMBER = {1},
     PAGES = {145--182}
}

@article {Roe08,
    AUTHOR = {Roesch, P.},
     TITLE = {On local connectivity for the {J}ulia set of rational maps:
              {N}ewton's famous example},
   JOURNAL = {Ann. of Math. (2)},
  FJOURNAL = {Annals of Mathematics. Second Series},
    VOLUME = {168},
      YEAR = {2008},
    NUMBER = {1},
     PAGES = {127--174}
}

@article {BH92,
    AUTHOR = {Branner, B. and Hubbard, J. H.},
     TITLE = {The iteration of cubic polynomials. {II}. {P}atterns and
              parapatterns},
   JOURNAL = {Acta Math.},
  FJOURNAL = {Acta Mathematica},
    VOLUME = {169},
      YEAR = {1992},
    NUMBER = {3-4},
     PAGES = {229--325}
}

@article {QY09,
    AUTHOR = {Qiu, W. and Yin, Y.},
     TITLE = {Proof of the {B}ranner-{H}ubbard conjecture on {C}antor
              {J}ulia sets},
   JOURNAL = {Sci. China Ser. A},
  FJOURNAL = {Science in China. Series A. Mathematics},
    VOLUME = {52},
      YEAR = {2009},
    NUMBER = {1},
     PAGES = {45--65}
}

@article {Pet96,
    AUTHOR = {Petersen, C. L.},
     TITLE = {Local connectivity of some {J}ulia sets containing a circle
              with an irrational rotation},
   JOURNAL = {Acta Math.},
  FJOURNAL = {Acta Mathematica},
    VOLUME = {177},
      YEAR = {1996},
    NUMBER = {2},
     PAGES = {163--224}
}

@article {Zhang05,
    AUTHOR = {Zhang, G.},
     TITLE = {On the dynamics of {$e^{2\pi i\theta}\sin(z)$}},
   JOURNAL = {Illinois J. Math.},
  FJOURNAL = {Illinois Journal of Mathematics},
    VOLUME = {49},
      YEAR = {2005},
    NUMBER = {4},
     PAGES = {1171--1179}
}

@article {Yang13,
    AUTHOR = {Yang, F.},
     TITLE = {On the dynamics of a family of entire functions},
   JOURNAL = {Acta Math. Sin. (Engl. Ser.)},
  FJOURNAL = {Acta Mathematica Sinica (English Series)},
    VOLUME = {29},
      YEAR = {2013},
    NUMBER = {11},
     PAGES = {2047--2072}
}

@article {Zhang16,
    AUTHOR = {Zhang, G.},
     TITLE = {On {PZ} type {S}iegel disks of the sine family},
   JOURNAL = {Ergodic Theory Dynam. Systems},
  FJOURNAL = {Ergodic Theory and Dynamical Systems},
    VOLUME = {36},
      YEAR = {2016},
    NUMBER = {3},
     PAGES = {973--1006}
}

@article {ZFS20,
    AUTHOR = {Zhang, S. and Fu, J. and Shi, X.},
     TITLE = {Bounded type {S}iegel disks of a family of sine families},
   JOURNAL = {J. Math. Anal. Appl.},
  FJOURNAL = {Journal of Mathematical Analysis and Applications},
    VOLUME = {488},
      YEAR = {2020},
    NUMBER = {1},
     PAGES = {124041, 12}
}

@article {EL92,
    AUTHOR = {Er\"emenko, A. \`E. and Lyubich, M. Yu.},
     TITLE = {Dynamical properties of some classes of entire functions},
   JOURNAL = {Ann. Inst. Fourier (Grenoble)},
  FJOURNAL = {Universit\'e{} de Grenoble. Annales de l'Institut Fourier},
    VOLUME = {42},
      YEAR = {1992},
    NUMBER = {4},
     PAGES = {989--1020}
}

@article {BW91,
    AUTHOR = {Baker, I. N. and Weinreich, J.},
     TITLE = {Boundaries which arise in the dynamics of entire functions},
      NOTE = {Analyse complexe (Bucharest, 1989)},
   JOURNAL = {Rev. Roumaine Math. Pures Appl.},
  FJOURNAL = {Acad\'emie de la R\'epublique Populaire Roumaine. Revue
              Roumaine de Math\'ematiques Pures et Appliqu\'ees},
    VOLUME = {36},
      YEAR = {1991},
    NUMBER = {7-8},
     PAGES = {413-420}
}

@article {Roe10,
    AUTHOR = {Roesch, P.},
     TITLE = {Cubic polynomials with a parabolic point},
   JOURNAL = {Ergodic Theory Dynam. Systems},
  FJOURNAL = {Ergodic Theory and Dynamical Systems},
    VOLUME = {30},
      YEAR = {2010},
    NUMBER = {6},
     PAGES = {1843-1867}
}

@inbook{RS08, 
  place={Cambridge}, 
  series={London Mathematical Society Lecture Note Series}, 
  title={Escaping points of the cosine family}, 
  booktitle={Transcendental Dynamics and Complex Analysis}, 
  publisher={Cambridge University Press}, 
  author={Rottenfusser, G. and Schleicher, D.}, 
  year={2008}, 
  pages={396–424}, 
  collection={London Mathematical Society Lecture Note Series}
}

@article {BR20,
    AUTHOR = {Benini, A. and Rempe, L.},
     TITLE = {A landing theorem for entire functions with bounded
              post-singular sets},
   JOURNAL = {Geom. Funct. Anal.},
  FJOURNAL = {Geometric and Functional Analysis},
    VOLUME = {30},
      YEAR = {2020},
    NUMBER = {6},
     PAGES = {1465-1530}
}

@incollection {Mil00,
    AUTHOR = {Milnor, J.},
     TITLE = {Local connectivity of {J}ulia sets: expository lectures},
 BOOKTITLE = {The {M}andelbrot set, theme and variations},
    SERIES = {London Math. Soc. Lecture Note Ser.},
    VOLUME = {274},
     PAGES = {67--116},
 PUBLISHER = {Cambridge Univ. Press, Cambridge},
      YEAR = {2000}
}

@article {PZ94,
    AUTHOR = {Przytycki, F. and Zdunik, A.},
     TITLE = {Density of periodic sources in the boundary of a basin of attraction for iteration of holomorphic maps: geometric coding trees technique},
   JOURNAL = {Fund. Math.},
  FJOURNAL = {Fundamenta Mathematicae},
    VOLUME = {145},
      YEAR = {1994},
    NUMBER = {1},
     PAGES = {65--77}
}

@article {QWY12,
    AUTHOR = {Qiu, W. and Wang, X. and Yin, Y.},
     TITLE = {Dynamics of {M}c{M}ullen maps},
   JOURNAL = {Adv. Math.},
  FJOURNAL = {Advances in Mathematics},
    VOLUME = {229},
      YEAR = {2012},
    NUMBER = {4},
     PAGES = {2525--2577}
}

@article {DH85,
    AUTHOR = {Douady, A. and Hubbard, J. H.},
     TITLE = {On the dynamics of polynomial-like mappings},
   JOURNAL = {Ann. Sci. \'Ecole Norm. Sup. (4)},
  FJOURNAL = {Annales Scientifiques de l'\'Ecole Normale Sup\'erieure.
              Quatri\`eme S\'erie},
    VOLUME = {18},
      YEAR = {1985},
    NUMBER = {2},
     PAGES = {287--343}
}

@book {McM94,
    AUTHOR = {McMullen, C. T.},
     TITLE = {Complex dynamics and renormalization},
    SERIES = {Annals of Mathematics Studies},
    VOLUME = {135},
 PUBLISHER = {Princeton University Press, Princeton, NJ},
      YEAR = {1994}
}

@article {LQ07,
    AUTHOR = {L\"u, J. and Qiu, W.},
     TITLE = {Local connectivity of {J}ulia sets for a family of biquadratic polynomials},
   JOURNAL = {Sci. China Ser. A},
  FJOURNAL = {Science in China. Series A. Mathematics},
    VOLUME = {50},
      YEAR = {2007},
    NUMBER = {10},
     PAGES = {1477--1492}
}

@book {Why42,
    AUTHOR = {Whyburn, G.},
     TITLE = {Analytic {T}opology},
    SERIES = {American Mathematical Society Colloquium Publications},
    VOLUME = {Vol. 28},
 PUBLISHER = {American Mathematical Society, New York},
      YEAR = {1942},
     PAGES = {x+278}
}

@article {BM02,
    AUTHOR = {Bergweiler, W. and Morosawa, S.},
     TITLE = {Semihyperbolic entire functions},
   JOURNAL = {Nonlinearity},
  FJOURNAL = {Nonlinearity},
    VOLUME = {15},
      YEAR = {2002},
    NUMBER = {5},
     PAGES = {1673-1684}
}

\end{document}